\documentclass[11pt]{article}

\usepackage[margin=1in]{geometry}
\usepackage{amsmath,amssymb,amsthm}
\usepackage{booktabs,array,longtable}
\usepackage{enumitem}
\usepackage{graphicx}
\usepackage{tikz}
\usepackage{placeins}
\usepackage{cite}
\usepackage[hidelinks,hyperfootnotes=false]{hyperref}
\usepackage{microtype}
\allowdisplaybreaks

\setlist[enumerate]{topsep=3pt,itemsep=1pt,parsep=0pt,partopsep=0pt}

\theoremstyle{plain}
\newtheorem{theorem}{Theorem}[section]
\newtheorem{lemma}[theorem]{Lemma}
\newtheorem{proposition}[theorem]{Proposition}
\newtheorem{corollary}[theorem]{Corollary}
\theoremstyle{definition}

\theoremstyle{remark}

\DeclareMathOperator{\Min}{Min}
\DeclareMathOperator{\Max}{Max}
\newcommand{\Crit}{\operatorname{Crit}}
\newcommand{\Inc}{\operatorname{Inc}}
\newcommand{\Vee}{\mathsf V}
\newcommand{\strictdown}{\mathord{\downarrow}^{\!\circ}}
\newcommand{\strictup}{\mathord{\uparrow}^{\!\circ}}
\newcommand{\fundingcontact}{%
\parbox[t]{0.94\linewidth}{%
This work is supported by the National Natural Science Foundation of China
(Grant nos.~12471438 and 12331016) and the Fundamental Research Funds for the
Central Universities (Grant no.~GK202501014).\\
\textsuperscript{*}Corresponding author.\\
Email addresses:
\href{mailto:dzc0419@gmail.com}{dzc0419@gmail.com} (Zhaochen Dong),
\href{mailto:wangkaiyun@snnu.edu.cn}{wangkaiyun@snnu.edu.cn}
(Kaiyun Wang).}}

\tikzset{
  every path/.style={line width=1.3pt},
  pn/.style={circle,draw,fill=white,inner sep=1.5pt,line width=1.3pt},
  every label/.append style={font=\small},
  title/.style={draw=none,fill=none,font=\large}
}

\title{The Kelly--Trotter product conjecture for posets of dimension three}
\author{%
Zhaochen Dong,\quad Kaiyun Wang\thanks{\fundingcontact}\\[0.7ex]
{\small\itshape School of Mathematics and Statistics, Shaanxi Normal
University,}\\[-0.1ex]
{\small\itshape Xi'an 710119, Shaanxi, P. R. China}}
\date{}

\begin{document}
\maketitle

\begin{abstract}
Kelly and Trotter conjectured that
\(\dim(P\times Q)\geq \dim P+\dim Q-2\) for all finite posets \(P\) and
\(Q\). We prove the conjecture when \(\dim P=\dim Q=3\). This also
disproves Trotter's conjecture that, for every \(1\leq m\leq n\), there
exist finite posets \(P\) and \(Q\) with
\(\dim P=m\), \(\dim Q=n\), and \(\dim(P\times Q)=n\).
We further prove that \(\dim(C_k\times P)=4\) for every finite poset
\(P\) with \(\dim P=3\) and every crown \(C_k\) with \(k\geq3\).
The proof uses the classification of \(3\)-irreducible posets and graphs
of critical pairs. For the six infinite noncrown families, we construct
explicit non-\(3\)-colorable subgraphs. The ten fixed posets are handled
by an exhaustive \(3\)-coloring search.
\end{abstract}

\medskip
\noindent\textbf{Keywords.}
order dimension;  \(3\)-irreducible posets;
crowns; graphs of critical pairs.

\smallskip
\noindent\textbf{2020 Mathematics Subject Classification.}
Primary 06A07; Secondary 05C15.

\section{Introduction}\label{sec:introduction}

For finite posets \(P\) and \(Q\),
\[
  \max\{\dim P,\dim Q\}
  \leq \dim(P\times Q)
  \leq \dim P+\dim Q.
\]
Kelly and Trotter conjectured the stronger lower bound
\begin{equation}\label{eq:product-conjecture}
  \dim(P\times Q)\geq \dim P+\dim Q-2
\end{equation}
for all finite posets \(P\) and \(Q\)~\cite{KellyTrotter1982}.
No counterexample is known~\cite{Bergman2026}.

In~\cite{Trotter1985}, Trotter proposed a competing conjecture: for
every \(1\leq m\leq n\), there exist finite posets \(P\) and \(Q\)
such that
\[
  \dim P=m,\qquad
  \dim Q=n,\qquad
  \dim(P\times Q)=n.
\]
For \(m=n=3\), this would require a product of dimension three,
whereas \eqref{eq:product-conjecture} requires every such product to
have dimension at least four.

For a positive integer \(m\), a poset \(P\) is \(m\)-irreducible if
\(\dim P=m\) and \(\dim(P-x)<m\) for every \(x\in P\).
Every finite poset \(P\) with \(\dim P=m\) contains an
\(m\)-irreducible subposet~\cite{Kelly1977}.  Thus, by monotonicity,
for factors of dimension three we may pass to \(3\)-irreducible
subposets.  Kelly and, independently, Trotter and Moore classified the
\(3\)-irreducible posets~\cite{Kelly1977,TrotterMoore1976}.  Up to
isomorphism and duality, the list consists of crowns, six infinite
noncrown families, and ten fixed posets.

Let \(\Vee\) be the poset on \(\{0,p,q\}\) with \(0<p\) and \(0<q\),
and let \(\Vee^d\) be its dual.  Let \(Z_3\), \(Z_3^d\), and \(C_2\)
be the remaining posets in Figure~\ref{fig:small-posets}.
All five posets in the figure have dimension two, and \(C_2\) is
self-dual.

For \(k\geq3\), let \(C_k\) denote the crown, the height-two poset whose
Hasse diagram is a cycle of length \(2k\).  In
Figure~\ref{fig:kelly-An}, these posets are labeled according to Kelly's
notation \(A_n=C_{n+3}\).  We have
\[
  \dim C_k=3 \quad\text{for } k\geq3,
\]
and every \(C_k\) contains copies of both \(\Vee\) and \(\Vee^d\)
\cite{BarreraCruzEtAl2019,TrotterCrown1974}.

\begin{figure}[htbp]
\centering
\begin{tikzpicture}[x=1cm,y=1cm]
  \begin{scope}[xshift=0cm]
    \node[pn,label=below:$0$] (v0) at (0,0) {};
    \node[pn,label=above left:$p$] (vp) at (-0.65,1.2) {};
    \node[pn,label=above right:$q$] (vq) at (0.65,1.2) {};
    \draw (v0)--(vp) (v0)--(vq);
    \node at (0,-0.75) {$\Vee$};
  \end{scope}
  \begin{scope}[xshift=3cm]
    \node[pn,label=below left:$p$] (vdp) at (-0.65,0) {};
    \node[pn,label=below right:$q$] (vdq) at (0.65,0) {};
    \node[pn,label=above:$0$] (vd0) at (0,1.2) {};
    \draw (vdp)--(vd0) (vdq)--(vd0);
    \node at (0,-0.75) {$\Vee^d$};
  \end{scope}
  \begin{scope}[xshift=6.3cm]
    \node[pn] (z0) at (0,0) {};
    \node[pn] (za) at (-0.9,1.2) {};
    \node[pn] (zb) at (0,1.2) {};
    \node[pn] (zc) at (0.9,1.2) {};
    \draw (z0)--(za) (z0)--(zb) (z0)--(zc);
    \node at (0,-0.75) {$Z_3$};
  \end{scope}
  \begin{scope}[xshift=10.1cm]
    \node[pn] (zda) at (-0.9,0) {};
    \node[pn] (zdb) at (0,0) {};
    \node[pn] (zdc) at (0.9,0) {};
    \node[pn] (zd0) at (0,1.2) {};
    \draw (zda)--(zd0) (zdb)--(zd0) (zdc)--(zd0);
    \node at (0,-0.75) {$Z_3^d$};
  \end{scope}
  \begin{scope}[xshift=13.7cm]
    \node[pn] (ca1) at (-0.65,0) {};
    \node[pn] (ca2) at (0.65,0) {};
    \node[pn] (cb1) at (-0.65,1.2) {};
    \node[pn] (cb2) at (0.65,1.2) {};
    \draw (ca1)--(cb1) (ca1)--(cb2) (ca2)--(cb1) (ca2)--(cb2);
    \node at (0,-0.75) {$C_2$};
  \end{scope}
\end{tikzpicture}
\caption{The posets \(\Vee\), \(\Vee^d\), \(Z_3\), \(Z_3^d\), and \(C_2\).}
\label{fig:small-posets}
\end{figure}

In~\cite{Reuter1989}, Reuter proved \(\dim(P\times P)\geq4\) whenever
\(\dim P=3\).  Using the classification of \(3\)-irreducible posets,
Felsner et al.\ later gave another proof of Reuter's
result~\cite{OrderGridsProducts}.  They also proved that
\[
  \dim(C_k\times C_\ell)=4 \quad\text{for } k,\ell\geq3.
\]

After passing to \(3\)-irreducible subposets, the problem therefore
naturally divides into three cases: crown--crown, crown--noncrown, and
noncrown--noncrown.  The crown--crown case is settled by the result
above.  For the noncrown--noncrown case, Felsner et al.\ observed that
every noncrown member of the classification contains a copy of
\(Z_3\), \(Z_3^d\), or \(C_2\), and proved the required lower bound for
\[
  Z_3\times Z_3,\qquad
  Z_3\times Z_3^d,\qquad
  Z_3\times C_2,\qquad
  C_2\times C_2.
\]
By duality, symmetry, and monotonicity, this settles the
noncrown--noncrown case.  Hence only the mixed crown--noncrown case
remains.

This remaining case cannot be treated by the same reduction.  Up to
duality, such a reduction would require
\[
  \dim(C_k\times Z_3)\geq4
  \qquad\text{and}\qquad
  \dim(C_k\times C_2)\geq4,
\]
but both inequalities are false.
Indeed, they proved that for every poset \(X\) and every \(k\geq3\),
\begin{equation}\label{eq:fmw-crown-upper-intro}
  \dim(X\times C_k)
  \leq\max\bigl\{3,\dim X+1\bigr\}.
\end{equation}
If \(R\) is a noncrown \(3\)-irreducible poset and \(X\) is a nonempty
proper subposet of \(R\), then \(\dim X\leq2\).  The general lower
bound together with \eqref{eq:fmw-crown-upper-intro} gives
\[
  \dim(C_k\times X)=3.
\]
Consequently, no proper subposet of \(R\) gives the required lower
bound through monotonicity.  Thus the mixed case must retain the full
noncrown factor \(R\).
Nevertheless, their computer calculations gave dimension four for
every tested crown--noncrown product of \(3\)-irreducible posets.
For the six infinite families, the cases \(n=0,1,2\) were computed
(see \cite[Table~1]{OrderGridsProducts}).

We prove the Kelly--Trotter conjecture when both factors have dimension
three.

\medskip
\noindent\textbf{Theorem~\ref{thm:main}.}\quad
\textit{Let \(P\) and \(Q\) be finite posets.  If
\(\dim P=\dim Q=3\), then}
\[
  \dim(P\times Q)\geq4.
\]

This result disproves Trotter's conjecture.  Indeed, the case
\(m=n=3\) would require \(\dim(P\times Q)=3\).

Following Felsner and Trotter~\cite{FelsnerTrotter2000}, we use the
graph of critical pairs to obtain lower bounds on dimension.  For a
poset \(Y\), let \(\Gamma(Y)\) denote its graph of critical pairs.  If
\(\dim Y=3\), then \(\Gamma(Y)\) is \(3\)-colorable.  Thus a
non-\(3\)-colorable subgraph of \(\Gamma(Y)\) implies
\(\dim Y\geq4\).

To simplify the proof of the mixed case, we work with
\(\Vee\times R\) or \(\Vee^d\times R\).  For every noncrown
\(3\)-irreducible poset \(R\),
we prove that at least one of
\(\Gamma(\Vee\times R)\) and \(\Gamma(\Vee^d\times R)\) is not
\(3\)-colorable.  By monotonicity, this gives
\(\dim(C_k\times R)\geq4\) for every \(k\geq3\).
For each of the six infinite noncrown families, we construct an
explicit non-\(3\)-colorable subgraph of \(\Gamma(\Vee\times R)\)
for every \(n\geq0\).  For each of the ten fixed posets, we verify by
an exhaustive \(3\)-coloring search that \(\Gamma(\Vee\times R)\) is
not \(3\)-colorable.  By duality, these results also cover the
corresponding dual posets.
Together with the crown--crown and noncrown--noncrown cases, the mixed
case proves Theorem~\ref{thm:main}.

\section{Preliminaries}\label{sec:preliminaries}

Throughout, all posets are finite.  We write \(x\parallel_P y\) when
\(x\) and \(y\) are \emph{incomparable} in \(P\).  A
\emph{linear extension} of \(P\) is a linear order \(L\) on the ground
set of \(P\) such that \(x\leq_Py\) implies \(x\leq_Ly\).  A nonempty
family \(\mathcal R\) of linear extensions is a \emph{realizer} of
\(P\) if
\[
  x\leq_Py
  \quad\Longleftrightarrow\quad
  x\leq_Ly
  \quad\text{for every }L\in\mathcal R.
\]
The \emph{dimension} of \(P\), denoted by \(\dim P\), is the least size
of a realizer of \(P\)~\cite{DushnikMiller1941,Trotter1992}.

The \emph{Cartesian product} \(P\times Q\) carries the coordinatewise
order.  The \emph{dual} \(P^d\) is defined by
\(x\leq_{P^d}y\) if and only if \(y\leq_Px\).  We use the standard
facts that dimension is monotone under taking subposets, that
\(\dim P^d=\dim P\), and that \(P\times Q\cong Q\times P\).  We also
have $  (P\times Q)^d=P^d\times Q^d$.

Let $  \Inc(P)=\bigl\{(x,y)\in P\times P:x\parallel_Py\bigr\}$.
A linear extension \(L\) \emph{reverses} \((x,y)\in\Inc(P)\) if
\(y<_Lx\).  For \(x\in P\), write
\[
\begin{aligned}
  \downarrow x
    &=\bigl\{u\in P:u\leq_Px\bigr\},
  &
  \strictdown x
    &=\bigl\{u\in P:u<_Px\bigr\},\\
  \uparrow x
    &=\bigl\{u\in P:x\leq_Pu\bigr\},
  &
  \strictup x
    &=\bigl\{u\in P:x<_Pu\bigr\}.
\end{aligned}
\]
The sets of \emph{minimal} and \emph{maximal} elements of \(P\) are
denoted by
\[
  \Min(P)=\bigl\{x\in P:\strictdown x=\varnothing\bigr\},
  \qquad
  \Max(P)=\bigl\{x\in P:\strictup x=\varnothing\bigr\}.
\]

A pair \((x,y)\in\Inc(P)\) is \emph{critical} if
\begin{equation}\label{eq:critical-pair}
  \strictdown x\subseteq\strictdown y,
  \qquad
  \strictup y\subseteq\strictup x.
\end{equation}
The set of critical pairs is denoted by \(\Crit(P)\).  A family of
linear extensions realizes \(P\) if and only if every critical pair is
reversed by some member of the family~\cite{KellyTrotter1982}.

Following Felsner and Trotter~\cite{FelsnerTrotter2000}, the
\emph{graph of critical pairs} \(\Gamma(P)\) has vertex set
\(\Crit(P)\).  Distinct vertices \((x_1,y_1)\) and \((x_2,y_2)\) are
\emph{adjacent} if and only if
\begin{equation}\label{eq:critical-edge}
  x_1\leq_Py_2
  \quad\text{and}\quad
  x_2\leq_Py_1.
\end{equation}
We write \(\alpha\sim\beta\) when \(\alpha\) and \(\beta\) are adjacent;
equivalently, \(\{\alpha,\beta\}\) is an edge of \(\Gamma(P)\).  In this
case, \(\alpha\) and \(\beta\) are neighbors.

For a positive integer \(k\), a \emph{proper \(k\)-coloring} of a graph
assigns one of \(k\) colors to each vertex so that adjacent vertices
receive different colors.  A graph is \emph{\(k\)-colorable} if it has
a proper \(k\)-coloring.

The following result is due to Felsner and Trotter
\cite[Lemma~3.3]{FelsnerTrotter2000}.

\begin{lemma}\label{lem:coloring-lower-bound}
If \(\dim P=d\), then \(\Gamma(P)\) is \(d\)-colorable.
\end{lemma}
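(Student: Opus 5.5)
The plan is to take a realizer \(\mathcal R=\{L_1,\dots,L_d\}\) of \(P\) with \(d=\dim P\) and use it to color the vertices of \(\Gamma(P)\).  The coloring rule is: give a critical pair \((x,y)\) the color \(i\) for the least index \(i\) such that \(L_i\) reverses \((x,y)\), i.e.\ \(y<_{L_i}x\).  Since \(\mathcal R\) realizes \(P\) and \(x\parallel_P y\), we have \(x\not\leq_P y\).  Hence some \(L_i\) places \(y\) below \(x\), so the rule assigns a color to every vertex and uses at most \(d\) colors.

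It remains to check that this coloring is proper.  Suppose \((x_1,y_1)\sim(x_2,y_2)\) are distinct critical pairs that both receive color \(i\).  Then \(L_i\) reverses both pairs:
\[
y_1<_{L_i}x_1 \qquad\text{and}\qquad y_2<_{L_i}x_2.
\]
Adjacency \eqref{eq:critical-edge} gives \(x_1\leq_P y_2\) and \(x_2\leq_P y_1\).  Because \(L_i\) is a linear extension, these become
\[
x_1\leq_{L_i}y_2 \qquad\text{and}\qquad x_2\leq_{L_i}y_1.
\]
Chaining the four relations yields
\[
y_1<_{L_i}x_1\leq_{L_i}y_2<_{L_i}x_2\leq_{L_i}y_1,
\]
so \(y_1<_{L_i}y_1\), a contradiction.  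Therefore adjacent vertices get different colors, and \(\Gamma(P)\) is \(d\)-colorable.

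No step here is a real obstacle.  The only points needing care are that every critical pair is reversed by some member of \(\mathcal R\), which follows directly from the definition of a realizer, and that the strict inequalities survive the chaining, which holds since a linear order is antisymmetric.  The criticality conditions \eqref{eq:critical-pair} play no role; the argument uses only incomparability and the adjacency condition.
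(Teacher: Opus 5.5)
Your proof is correct: adjacent pairs satisfy $x_1\leq_P y_2$ and $x_2\leq_P y_1$, so the chain $y_1<_{L_i}x_1\leq_{L_i}y_2<_{L_i}x_2\leq_{L_i}y_1$ shows that no single $L_i$ reverses both, and coloring each critical pair by the least index of a realizer member reversing it is therefore proper. The paper does not prove this lemma itself but cites Felsner and Trotter, and your argument is the standard reasoning behind that citation: an edge of $\Gamma(P)$ is an alternating cycle of length two, which no single linear extension can reverse.
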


\begin{corollary}\label{cor:nonthreecolorable-lower-bound}
If a subgraph of \(\Gamma(P)\) is not \(3\)-colorable, then
\(\dim P\geq4\).
\end{corollary}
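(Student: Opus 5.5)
The statement is an immediate consequence of Lemma~\ref{lem:coloring-lower-bound}, so the plan is a proof by contrapositive. Let \(H\) be a subgraph of \(\Gamma(P)\) that is not \(3\)-colorable, and suppose for contradiction that \(\dim P\leq3\). Put \(d=\dim P\). Since \(P\) is a finite poset, \(d\) is a positive integer, so Lemma~\ref{lem:coloring-lower-bound} applies and gives a proper \(d\)-coloring \(c\) of \(\Gamma(P)\).

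Two small observations then finish the argument. First, because \(d\leq3\), the coloring \(c\) uses colors from \(\{1,\dots,d\}\subseteq\{1,2,3\}\), so \(c\) is also a proper \(3\)-coloring of \(\Gamma(P)\). Second, restricting \(c\) to the vertex set of \(H\) gives a proper coloring of \(H\). This works because every edge of \(H\) is an edge of \(\Gamma(P)\), so its endpoints already receive different colors under \(c\). Hence \(H\) is \(3\)-colorable, which contradicts the hypothesis. Therefore \(\dim P\geq4\).

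There is no substantive obstacle. The only points that need checking are that the definition of \(k\)-colorable allows some of the \(k\) colors to go unused, which is what lets a \(d\)-coloring count as a \(3\)-coloring when \(d<3\), and that \(3\)-colorability passes from a graph to its subgraphs. Both follow directly from the definitions in Section~\ref{sec:preliminaries}.
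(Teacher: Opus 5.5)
Your proposal is correct and matches the paper's proof: assume \(\dim P\leq3\), use Lemma~\ref{lem:coloring-lower-bound} to get a proper \(3\)-coloring of \(\Gamma(P)\), and restrict it to the subgraph to reach a contradiction. Your remark that a \(d\)-coloring with \(d<3\) is already a \(3\)-coloring makes explicit a step the paper leaves implicit.
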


\begin{proof}
Suppose that \(\dim P\leq3\).  By
Lemma~\ref{lem:coloring-lower-bound}, \(\Gamma(P)\) has a proper
\(3\)-coloring.  Its restriction to any subgraph is also a proper
\(3\)-coloring, a contradiction.
\end{proof}

\begin{lemma}\label{lem:graph-duality}
For every poset \(P\), the map
\[
  \Gamma(P)\longrightarrow\Gamma(P^d),
  \qquad
  (x,y)\longmapsto(y,x),
\]
is a graph isomorphism.
\end{lemma}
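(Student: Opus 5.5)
The plan is to verify directly, from the definitions in \eqref{eq:critical-pair} and \eqref{eq:critical-edge}, that the map \((x,y)\mapsto(y,x)\) is a bijection \(\Crit(P)\to\Crit(P^d)\), and then that it preserves both adjacency and non-adjacency. The only tool needed is how the basic notions translate under duality. We have \(x\leq_{P^d}y\) if and only if \(y\leq_Px\), so incomparability is unchanged, \(x\parallel_{P^d}y\iff x\parallel_Py\), and \((x,y)\in\Inc(P)\iff(y,x)\in\Inc(P^d)\). The strict down-set of \(x\) in \(P^d\) is the strict up-set of \(x\) in \(P\), and vice versa. I write these as \(\strictdown_{P^d}x=\strictup_Px\) and \(\strictup_{P^d}x=\strictdown_Px\).

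For the vertex sets, take \((x,y)\in\Crit(P)\). We must check that \((y,x)\) satisfies \eqref{eq:critical-pair} in \(P^d\), namely
\[
\strictdown_{P^d}y\subseteq\strictdown_{P^d}x
\quad\text{and}\quad
\strictup_{P^d}x\subseteq\strictup_{P^d}y .
\]
By the translation above, these say \(\strictup_Py\subseteq\strictup_Px\) and \(\strictdown_Px\subseteq\strictdown_Py\). These are exactly the two conditions for \((x,y)\) to be critical in \(P\), only listed in the other order. Since \((P^d)^d=P\), the same argument applied to \(P^d\) shows that the swap map from \(\Crit(P^d)\) to \(\Crit(P)\) is an inverse. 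Hence the map is a bijection, and distinct vertices go to distinct vertices.

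For the edges, take distinct critical pairs \((x_1,y_1)\) and \((x_2,y_2)\). Their images \((y_1,x_1)\) and \((y_2,x_2)\) are adjacent in \(\Gamma(P^d)\) if and only if \(y_1\leq_{P^d}x_2\) and \(y_2\leq_{P^d}x_1\). In \(P\) this reads \(x_2\leq_Py_1\) and \(x_1\leq_Py_2\), which is precisely condition \eqref{eq:critical-edge} for \((x_1,y_1)\sim(x_2,y_2)\) in \(\Gamma(P)\). So adjacency holds before the map exactly when it holds after, and the map is a graph isomorphism.

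There is no real obstacle here. The only step needing care is the bookkeeping: checking that swapping the coordinates exactly compensates for the reversal of \(\leq\), both in the two inclusions of \eqref{eq:critical-pair} and in the two comparabilities of \eqref{eq:critical-edge}. The symmetric form of the edge condition is what makes it work.
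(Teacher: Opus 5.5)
Your proposal is correct and follows the same route as the paper: both show that \((x,y)\) is critical in \(P\) exactly when \((y,x)\) is critical in \(P^d\), using \eqref{eq:critical-pair} with the roles of down-sets and up-sets exchanged, and both check that adjacency \eqref{eq:critical-edge} is preserved because swapping coordinates cancels the reversal of \(\leq\). You spell out the translations \(\strictdown_{P^d}x=\strictup_Px\) and \(\strictup_{P^d}x=\strictdown_Px\) and the incomparability condition, which the paper leaves implicit.
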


\begin{proof}
By \eqref{eq:critical-pair}, \((x,y)\) is critical in \(P\) if and
only if \((y,x)\) is critical in \(P^d\).  Therefore, the map is  a
bijection on critical pairs.  By \eqref{eq:critical-edge}, two critical
pairs are adjacent if and only if their images are adjacent.
\end{proof}

We use Kelly's notation for the \(3\)-irreducible posets.  The crowns
are \(A_n=C_{n+3}\), the six infinite noncrown families are
\(E_n,F_n,G_n,H_n,I_n,J_n\), and the ten fixed posets are
\[
  B,C,D,CX_1,CX_2,CX_3,EX_1,EX_2,FX_1,FX_2.
\]
The diagrams below use the labels from Kelly's classification
\cite{Kelly1977}.

\begin{figure}[htbp]
\centering
\resizebox{0.82\textwidth}{!}{%
\begin{tikzpicture}[scale=1.1]
\node[pn, label=below:$a_1$] (a1) at (0,0) {};
\node[pn, label=below:$a_2$] (a2) at (2,0) {};
\node[pn, label=below:$a_3$] (a3) at (4,0) {};
\node[pn, label=below:$a_{n+3}$] (an3) at (7,0) {};
\node[pn, label=above:$b_1$] (b1) at (1,1.5) {};
\node[pn, label=above:$b_2$] (b2) at (3,1.5) {};
\node[pn, label=above:$b_{n+2}$] (bn2) at (6,1.5) {};
\node[pn, label=above:$b_{n+3}$] (bn3) at (8,1.5) {};

\draw
  (a1)--(b1)
  (a1)--(bn3)
  (a2)--(b1)
  (a2)--(b2)
  (a3)--(b2);
\draw
  (an3)--(bn2)
  (an3)--(bn3);
\draw (a3)--(4.5,0.75);
\draw (5.5,0.75)--(bn2);
\node at (5,1) {\Large $\dots$};
\end{tikzpicture}%
}
\caption{The crown \(A_n=C_{n+3}\).}
\label{fig:kelly-An}
\end{figure}

\section{Critical pairs in \texorpdfstring{\(\Vee\times R\)}{V x R}}
\label{sec:three-point}

The following lemma gives the two cases used below.

\begin{lemma}\label{lem:product-critical}
Let \(R\) be a poset and let \(x,y\in R\).
\begin{enumerate}[label=\textup{(\roman*)},leftmargin=*]
\item
For each \(s\in\{p,q\}\), the pair
\(\bigl((0,x),(s,y)\bigr)\) is critical in \(\Vee\times R\) if and
only if
\begin{equation}\label{eq:product-critical-first}
  x\nleq_Ry,
  \quad
  \strictdown x\subseteq\downarrow y,
  \quad
  \strictup y\subseteq\uparrow x.
\end{equation}

\item
For distinct \(s,t\in\{p,q\}\), the pair
\(\bigl((s,x),(t,y)\bigr)\) is critical in \(\Vee\times R\) if and
only if
\begin{equation}\label{eq:product-critical-second}
  x\leq_Ry,
  \quad
  x\in\Min(R),
  \quad
  y\in\Max(R).
\end{equation}
\end{enumerate}
\end{lemma}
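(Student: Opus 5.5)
The plan is to unfold the definitions directly: incomparability in \(\Vee\times R\), then the two inclusions in \eqref{eq:critical-pair}, computing strict down-sets and up-sets in the product coordinatewise. In \(\Vee\) we have \(\strictdown 0=\varnothing\), \(\strictup 0=\{p,q\}\), \(\strictdown s=\{0\}\), and \(\strictup s=\varnothing\) for \(s\in\{p,q\}\). In \(\Vee\times R\), an element \((u,v)\) lies in \(\strictdown(a,b)\) if and only if \(u\leq a\), \(v\leq_R b\), and \((u,v)\neq(a,b)\). So \(\strictdown(a,b)\) splits into two parts: the elements \((a,v)\) with \(v\in\strictdown b\), and the elements \((u,v)\) with \(u<a\) and \(v\in\downarrow b\). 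The strict up-set splits in the same way.

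For (i), take \(\alpha=(0,x)\) and \(\beta=(s,y)\).
\begin{enumerate}[label=(\alph*)]
\item \emph{Incomparability.} Since \(0<s\), we can never have \(\beta\leq\alpha\). Hence \(\alpha\parallel\beta\) if and only if \(x\nleq_R y\).
\item \emph{Down-sets.} \(\strictdown\alpha=\{(0,v):v<_R x\}\). We need each such element to be \(\leq(s,y)\) and different from \((s,y)\). The second condition is automatic, and the first holds exactly when \(v\leq_R y\). So this inclusion is equivalent to \(\strictdown x\subseteq\downarrow y\).
\item \emph{Up-sets.} \(\strictup\beta=\{(s,w):y<_R w\}\). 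We need each such element to be \(\geq(0,x)\) and different from it. The second condition is automatic, and the first holds exactly when \(x\leq_R w\). So this inclusion is equivalent to \(\strictup y\subseteq\uparrow x\).
\end{enumerate}
This gives \eqref{eq:product-critical-first}.

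For (ii), take \(\alpha=(s,x)\) and \(\beta=(t,y)\) with \(s\neq t\). The first coordinates \(s\) and \(t\) are incomparable in \(\Vee\), so \(\alpha\parallel\beta\) always holds. Now
\[
\strictdown\alpha=\{(s,v):v<_R x\}\cup\{(0,v):v\leq_R x\}.
\]
\begin{enumerate}[label=(\alph*)]
\item \emph{The \((s,v)\) part.} An element \((s,v)\) lies below \((t,y)\) only if \(s\leq t\), which fails. So the inclusion \(\strictdown\alpha\subseteq\strictdown\beta\) forces \(\strictdown x=\varnothing\), that is, \(x\in\Min(R)\).
\item \emph{The \((0,v)\) part.} Each \((0,v)\) must satisfy \((0,v)<(t,y)\), which holds exactly when \(v\leq_R y\). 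The case \(v=x\) forces \(x\leq_R y\), and conversely \(x\leq_R y\) handles every \(v\leq_R x\) by transitivity.
\item \emph{Up-sets.} Dually, \(\strictup\beta=\{(t,w):y<_R w\}\), with no \(0\)-part since \(\strictup t=\varnothing\). An element \((t,w)\) lies above \((s,x)\) only if \(s\leq t\), which fails. So the inclusion \(\strictup\beta\subseteq\strictup\alpha\) is equivalent to \(\strictup y=\varnothing\), that is, \(y\in\Max(R)\).
\end{enumerate}
Together these give \eqref{eq:product-critical-second}.

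There is no real obstacle here. The only step needing care is the strict-versus-nonstrict bookkeeping, namely checking that the "\(\neq\)" requirements are automatic because the first coordinates differ. The point where the argument could slip is (ii), where the \(v=x\) instance is what produces \(x\leq_R y\).
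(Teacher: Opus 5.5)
Your proof is correct and follows the paper's route: you check incomparability in \(\Vee\times R\) and then unfold the two inclusions defining a critical pair coordinatewise. The paper does this in a few lines; you additionally spell out the strict down-sets and up-sets in the product. That makes explicit why the "\(\neq\)" conditions are automatic, and that the \(v=x\) instance is what yields \(x\leq_R y\) in (ii).
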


\begin{proof}
For \textup{(i)}, the two product points are
incomparable if and only if \(x\nleq_Ry\).  The conditions in
\eqref{eq:critical-pair} give the two remaining inclusions in
\eqref{eq:product-critical-first}.
For \textup{(ii)}, incomparability is automatic because
\(p\parallel_{\Vee}q\).  The remaining conditions in
\eqref{eq:critical-pair} are exactly
\eqref{eq:product-critical-second}.
\end{proof}

If \(x\in\Min(R)\) and \(y\in\Max(R)\), then
\eqref{eq:product-critical-first} reduces to \(x\nleq_Ry\).
Let
\[
  \alpha=\bigl((s,x),(t,y)\bigr),
  \qquad
  \beta=\bigl((s',x'),(t',y')\bigr)
\]
be distinct critical pairs in \(\Vee\times R\).  Since the product
order is coordinatewise, by \eqref{eq:critical-edge}, we have 
\begin{equation}\label{eq:product-edge}
  \alpha\sim\beta
  \quad\Longleftrightarrow\quad
  s\leq_{\Vee}t',\quad
  s'\leq_{\Vee}t,\quad
  x\leq_Ry',\quad
  x'\leq_Ry.
\end{equation}
In the applications below, the first two comparisons in
\eqref{eq:product-edge} follow immediately from \(0<p\), \(0<q\), or
equality.  Thus only the last two comparisons need to be checked
explicitly.

\begin{proposition}\label{prop:crown-product}
Let \(R\) be a poset with \(\dim R=3\).  If at least one of
\(\Gamma(\Vee\times R)\) and \(\Gamma(\Vee^d\times R)\) is not
\(3\)-colorable, then
\(\dim(C_k\times R)=4\) for every \(k\geq3\).
\end{proposition}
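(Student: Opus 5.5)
We prove the two inequalities \(\dim(C_k\times R)\geq4\) and \(\dim(C_k\times R)\leq4\) separately.

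For the lower bound, fix \(k\geq3\). As recalled in the introduction, \(C_k\) contains copies of both \(\Vee\) and \(\Vee^d\). Hence \(\Vee\times R\) and \(\Vee^d\times R\) are both subposets of \(C_k\times R\), since the coordinatewise order restricts correctly. By hypothesis, one of \(\Gamma(\Vee\times R)\) and \(\Gamma(\Vee^d\times R)\) is not \(3\)-colorable. It is trivially a non-\(3\)-colorable subgraph of itself, so Corollary~\ref{cor:nonthreecolorable-lower-bound} gives \(\dim(\Vee\times R)\geq4\) or \(\dim(\Vee^d\times R)\geq4\). Monotonicity of dimension under subposets then yields \(\dim(C_k\times R)\geq4\). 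No duality argument is needed here, since both \(\Vee\) and \(\Vee^d\) embed in \(C_k\) directly.

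For the upper bound, we invoke the Felsner--Massow--Wiechert estimate \eqref{eq:fmw-crown-upper-intro} with \(X=R\):
\[
  \dim(R\times C_k)\leq\max\{3,\dim R+1\}=4.
\]
Combined with \(R\times C_k\cong C_k\times R\), this gives \(\dim(C_k\times R)\leq 4\). Alternatively, one could use the trivial bound \(\dim(C_k\times R)\leq\dim C_k+\dim R=6\), but that does not give equality, so the cited bound is essential.

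The only point needing care is the embedding of \(\Vee\) and \(\Vee^d\) into \(C_k\). This is immediate: a minimal element \(a_i\) with its two covers \(b_{i-1},b_i\) gives \(\Vee\), and a maximal element \(b_i\) with \(a_i,a_{i+1}\) gives \(\Vee^d\). The incomparability of the two upper (respectively lower) points holds because \(C_k\) has height two. I expect no real obstacle beyond citing the upper bound correctly.
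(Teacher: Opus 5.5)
Your proof is correct and is essentially the paper's argument verbatim: Corollary~\ref{cor:nonthreecolorable-lower-bound}, the embeddings of \(\Vee\) and \(\Vee^d\) in \(C_k\), and monotonicity give the lower bound, and \eqref{eq:fmw-crown-upper-intro} with \(X=R\) gives the upper bound. One cosmetic slip: that upper bound is due to Felsner, M\"utze, and Wittmann~\cite{OrderGridsProducts}, not ``Felsner--Massow--Wiechert.''
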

\begin{proof}
By Corollary~\ref{cor:nonthreecolorable-lower-bound}, we have \(\dim(\Vee\times R)\geq4\) or
\(\dim(\Vee^d\times R)\geq4\).  Since every \(C_k\) contains copies of
both \(\Vee\) and \(\Vee^d\), monotonicity gives
\(\dim(C_k\times R)\geq4\).  The reverse inequality follows from
\eqref{eq:fmw-crown-upper-intro} with \(X=R\).
\end{proof}

\section{The six infinite families}\label{sec:infinite}

The classification contains six infinite families
\(E_n,F_n,G_n,H_n,I_n,J_n\).  For each member \(R\) of these families,
the corresponding subsection gives a labeled Hasse diagram of \(R\)
and two tables defining a graph \(W_R\).  The left table lists the
vertices of \(W_R\) as ordered pairs in \(\Vee\times R\).  In the
right table, every vertex in the first column is adjacent to every
vertex in the second column, with the stated range applying to the
entire row.  Each adjacency is recorded only once.  We write
\(x\sim y_1,\ldots,y_k\) to mean that \(x\) is adjacent to each of
\(y_1,\ldots,y_k\).

\begin{theorem}\label{thm:infinite}
For every \(n\geq0\) and every \(R\in\{E_n,F_n,G_n,H_n,I_n,J_n\}\), the graph \(\Gamma(\Vee\times R)\) is not \(3\)-colorable.
\end{theorem}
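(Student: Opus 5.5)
For each family the plan is to exhibit an explicit finite subgraph \(W_R\subseteq\Gamma(\Vee\times R)\) and show that \(W_R\) is not \(3\)-colorable; Corollary~\ref{cor:nonthreecolorable-lower-bound} then converts this into \(\dim(\Vee\times R)\geq4\). There are three steps. First I would fix Kelly's labelling of \(R\), with minimal elements \(a_i\), maximal elements \(b_i\), and the few extra elements that distinguish \(E_n,\dots,J_n\) from a crown. I would then list candidate vertices of two kinds. Type~(i) vertices are pairs \(\bigl((0,x),(s,y)\bigr)\) with \(x\in\Min(R)\), \(y\in\Max(R)\) and \(x\parallel y\). Type~(ii) vertices are pairs \(\bigl((p,x),(q,y)\bigr)\) or \(\bigl((q,x),(p,y)\bigr)\) with \(x\leq_R y\), \(x\) minimal and \(y\) maximal. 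Choosing \(x\) minimal and \(y\) maximal makes criticality immediate from Lemma~\ref{lem:product-critical}, since the inclusion conditions in \eqref{eq:product-critical-first} become vacuous. When a non-extremal element is needed, for instance the middle element of the chain in some families, I would verify the two inclusions of \eqref{eq:product-critical-first} by hand for that specific \(x,y\).

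Second, I would read off adjacencies using \eqref{eq:product-edge}. In the \(\Vee\)-coordinate, two type~(i) vertices \(((0,x),(s,y))\) and \(((0,x'),(s',y'))\) are always compatible, because \(0\leq s'\) and \(0\leq s\). So they are adjacent exactly when \(x\leq y'\) and \(x'\leq y\) in \(R\). Type~(ii) vertices with opposite orientations are compatible in the first coordinate when \(s=t'\). This is the useful feature of type~(ii): the comparable pairs \(x\leq y\) of \(R\), which are useless in \(\Gamma(R)\) itself, become vertices. The resulting graph is roughly a doubled copy of the crown-like incomparability structure of \(R\), with extra chords coming from the comparabilities.

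Third, I would prove non-\(3\)-colorability of \(W_R\) uniformly in \(n\). The shape I expect is a chain of triangles or \(K_4^-\)-gadgets running along the indices \(1,\dots,n+3\). In any proper \(3\)-coloring, each gadget forces two specified vertices to receive equal colors, or forces a fixed permutation of colors, and this propagates from one end of the chain to the other. The special elements at the ends of each family then close the chain with a contradictory constraint, such as an odd wheel or two vertices forced both equal and adjacent. I would prove the propagation step by induction on the index \(i\), with a short lemma of the form ``if \(u_i,v_i\) share a color then so do \(u_{i+1},v_{i+1}\)''. The closing contradiction would be handled separately for each of \(E_n,F_n,G_n,H_n,I_n,J_n\), with the small cases \(n=0,1\) checked against the pattern or directly.

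The main obstacle is the third step, specifically finding the right gadget. Since \(\dim(C_k\times X)=3\) for every proper subposet \(X\) of \(R\), any obstruction must involve every element of \(R\), and so it must be genuinely global along the cycle. My first attempt would be to compute \(\Gamma(\Vee\times R)\) for \(n=0,1\) by hand, find a minimal \(4\)-critical subgraph, and identify its periodic part to extend to general \(n\). I would then present each family's \(W_R\) as the tables described in the paper and verify each listed adjacency via \eqref{eq:product-edge}.
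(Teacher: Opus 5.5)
Your framework is the paper's: an explicit \(W_R\subseteq\Gamma(\Vee\times R)\) for each family, criticality checked via Lemma~\ref{lem:product-critical}, edges via \eqref{eq:product-edge}, and a coloring contradiction that propagates along the index and closes at the ends. Your side remarks are also correct: the \(\Vee\)-coordinate compatibility rules hold, and any obstruction must use every element of \(R\). The gap is that the proposal stops where the proof begins. The theorem says precisely that, for each of the six families and every \(n\geq0\), a specific non-\(3\)-colorable subgraph exists. You give no vertex set, no edge set, and no coloring argument for any family, and you explicitly leave finding the gadget as the unresolved main obstacle. Your route to it is a search heuristic, not an argument: compute \(\Gamma(\Vee\times R)\) for \(n=0,1\), extract a \(4\)-critical subgraph, and extrapolate its periodic part. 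Nothing guarantees a uniform pattern. Even once one is found, the propagation step and both end contradictions must still be proved for all \(n\). This includes small cases where the end gadgets interact: the paper needs the extra vertex \(\pi_3\) for \(G_0,J_0\) and separate subcases for \(H_0\).

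The actual constructions also differ from what you anticipate. The backbone is not min--max pairs with occasional exceptions. It consists of critical pairs of \(R\) itself placed at the \(0\)-level. Examples are \(((0,a_i),(q,c))\) and \(((0,c),(q,b_i))\) in \(E_n\) and \(I_n\): in the first both coordinates are minimal, and in the second \(b_i\) is not maximal for interior \(i\). In \(G_n\) and \(J_n\) they are \(((0,a_i),(q,b_i))\) and \(((0,b_i),(q,a_{i+1}))\), whose coordinates are interior chain elements. So the full inclusions of \eqref{eq:product-critical-first} must be checked throughout. These chains are tied together by a few type~(ii) hubs, e.g.\ \(\pi_1=((q,a_1),(p,a_{n+3}))\) and \(\pi_2=((q,b_1),(p,b_{n+3}))\) in \(G_n,J_n\). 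The closing arguments are family-specific, not one gadget chain:
\begin{itemize}
\item In \(F_n\), and in \(H_n\) once the hub colors are fixed, the vertices \(\alpha_i\) force \(\varphi(\beta_i)=\varphi(\beta_{i+1})\).
\item In \(G_n,J_n\) the key object is the odd cycle \(\alpha_1,\beta_1,\ldots,\alpha_{n+3},r_1,r_2\) of length \(2n+7\). Each vertex is adjacent to \(\pi_1\) or \(\pi_2\), which forces \(\varphi(\pi_1)\neq\varphi(\pi_2)\) and then a local contradiction at \(\beta_2\) (or at \(\pi_3\) when \(n=0\)).
\item \(I_n\) uses a parity argument along the path \(\beta_1,\gamma_1,\beta_2,\ldots,\gamma_{n+2},\beta_{n+3}\).
\item \(E_n\) needs four successive claims.
\end{itemize}
To turn your plan into a proof, you must supply explicit graphs of this kind and carry out such case analyses uniformly in \(n\).
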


\subsection{The family \texorpdfstring{\(E_n\)}{E-n}}

\begin{figure}[htbp]
\centering
\resizebox{0.70\textwidth}{!}{%
\begin{tikzpicture}[scale=1.1]
\begin{scope}
\node[pn, label=below:$a_1$] (a1) at (1,0) {};
\node[pn, label=below:$a_2$] (a2) at (3,0) {};
\node[pn, label=below:$a_3$] (a3) at (5,0) {};
\node[pn, label=below:$a_{n+2}$] (an2) at (8,0) {};
\node[pn, label=above left:$b_1$] (b1) at (0,1.5) {};
\node[pn, label=left:$b_2$] (b2) at (2,1.5) {};
\node[pn, label=left:$b_3$] (b3) at (4,1.5) {};
\node[pn, label=right:$b_{n+2}$] (bn2) at (7,1.5) {};
\node[pn, label=right:$b_{n+3}$] (bn3) at (9,1.5) {};
\node[pn, label=above:$d$] (d) at (4,3.5) {};
\node[pn, label=right:$c$] (c) at (8,2.5) {};

\draw (a1)--(b1) (a1)--(b2) (a2)--(b2) (a2)--(b3) (a3)--(b3);
\draw (an2)--(bn2) (an2)--(bn3);
\draw (b2)--(d) (b3)--(d) (bn2)--(d) (c)--(d);
\draw (a3)--(5.5,0.75);
\draw (6.5,0.75)--(bn2);
\node at (6,0.75) {\Large \(\dots\)};
\end{scope}
\end{tikzpicture}%
}
\caption{The poset \(E_n\).}
\label{fig:kelly-En}
\end{figure}

The following two tables define \(W_{E_n}\).
\par\nopagebreak[4]
\noindent
{\small
\renewcommand{\arraystretch}{1.08}
\begin{minipage}[t]{0.49\textwidth}
\vspace{0pt}
\centering
\begin{tabular}{@{}clc@{}}
\toprule
Vertex & Critical pair & Range\\
\midrule
\(\alpha_i\) & \(\bigl((0,a_i),(q,c)\bigr)\) & \(1\leq i\leq n+2\)\\
\(\beta_i\) & \(\bigl((q,a_i),(p,d)\bigr)\) & \(1\leq i\leq n+2\)\\
\(\gamma_i\) & \(\bigl((0,c),(q,b_i)\bigr)\) & \(1\leq i\leq n+3\)\\
\(\pi_1\) & \(\bigl((p,a_1),(q,d)\bigr)\) & \\
\(\pi_2\) & \(\bigl((p,a_{n+2}),(q,d)\bigr)\) & \\
\(\pi_3\) & \(\bigl((q,c),(p,d)\bigr)\) & \\
\(\ell_1\) & \(\bigl((0,c),(p,b_1)\bigr)\) & \\
\(\ell_2\) & \(\bigl((0,a_{n+2}),(p,b_1)\bigr)\) & \\
\(\ell_3\) & \(\bigl((0,a_{n+2}),(q,b_1)\bigr)\) & \\
\(\ell_4\) & \(\bigl((0,c),(p,b_{n+3})\bigr)\) & \\
\(\ell_5\) & \(\bigl((0,a_1),(p,b_{n+3})\bigr)\) & \\
\(\ell_6\) & \(\bigl((0,a_1),(q,b_{n+3})\bigr)\) & \\
\(r_1\) & \(\bigl((0,b_1),(q,d)\bigr)\) & \\
\(r_2\) & \(\bigl((0,b_{n+3}),(q,d)\bigr)\) & \\
\bottomrule
\end{tabular}
\end{minipage}
\hfill
\begin{minipage}[t]{0.49\textwidth}
\vspace{0pt}
\centering
\begin{tabular}{@{}lll@{}}
\toprule
Vertices & Adjacent vertices & Range\\
\midrule
\(\alpha_1\) & \(\ell_1,\gamma_1,\gamma_2,\pi_3\) & \\
\(\alpha_i\) & \(\gamma_i,\gamma_{i+1},\pi_3\) & \(2\leq i\leq n+1\)\\
\(\alpha_{n+2}\) & \(\ell_4,\gamma_{n+2},\gamma_{n+3},\pi_3\) & \\
\(\beta_1\) & \(\pi_1,\pi_2,\gamma_1,\gamma_2,\ell_3\) & \\
\(\beta_i\) & \(\pi_1,\pi_2,\gamma_i,\gamma_{i+1}\) & \(2\leq i\leq n+1\)\\
\(\beta_{n+2}\) & \(\pi_1,\pi_2,\gamma_{n+2},\gamma_{n+3},\ell_6\) & \\
\(\gamma_1\) & \(r_1\) & \\
\(\gamma_{n+3}\) & \(r_2\) & \\
\(\pi_1\) & \(\pi_3,\ell_1,\ell_2\) & \\
\(\pi_2\) & \(\pi_3,\ell_4,\ell_5\) & \\
\(\ell_1\) & \(r_1\) & \\
\(\ell_2,\ell_3\) & \(\ell_5,\ell_6,r_1\) & \\
\(\ell_4,\ell_5,\ell_6\) & \(r_2\) & \\
\bottomrule
\end{tabular}
\end{minipage}
}

\begin{proposition}\label{prop:E-coloring}
For every \(n\geq0\),
\(W_{E_n}\subseteq\Gamma(\Vee\times E_n)\), and \(W_{E_n}\) is not
\(3\)-colorable.
\end{proposition}

\begin{proof}
\smallskip
\noindent\emph{Critical pairs.}
The minimal and maximal elements of \(E_n\) are
\[
 \Min(E_n)=\{a_i:1\leq i\leq n+2\}\cup\{c\},
 \qquad
 \Max(E_n)=\{b_1,b_{n+3},d\}.
\]
For \(\alpha_i\), condition~\eqref{eq:product-critical-first} follows
from
\[
 a_i\nleq c,\qquad
 \strictdown a_i=\varnothing,\qquad
 \strictup c=\{d\}\subseteq\uparrow a_i.
\]
For \(\gamma_i\), we have \(c\nleq b_i\) and
\(\strictdown c=\varnothing\).  The element \(b_i\) is maximal when
\(i=1\) or \(i=n+3\).  For \(2\leq i\leq n+2\),
\(\strictup b_i=\{d\}\subseteq\uparrow c\).  Thus
\eqref{eq:product-critical-first} also holds for every \(\gamma_i\).
For each of \(\ell_1,\ldots,\ell_6\), the first \(E_n\)-coordinate
is minimal, the second is maximal, and the two are incomparable.
Hence \eqref{eq:product-critical-first} holds for these six vertices.
For \(r_1\) and \(r_2\), condition~\eqref{eq:product-critical-first}
follows from
\[
\begin{array}{llll}
 r_1:& b_1\nleq d,&
 \strictdown b_1=\{a_1\}\subseteq\downarrow d,&
 \strictup d=\varnothing,\\
 r_2:& b_{n+3}\nleq d,&
 \strictdown b_{n+3}=\{a_{n+2}\}\subseteq\downarrow d,&
 \strictup d=\varnothing.
\end{array}
\]
By Lemma~\ref{lem:product-critical}\textup{(i)},
\(\alpha_i,\gamma_i,\ell_1,\ldots,\ell_6,r_1,r_2\) are critical pairs.
Since \(a_i\leq d\) and \(c\leq d\),
Lemma~\ref{lem:product-critical}\textup{(ii)} shows that
\(\beta_i,\pi_1,\pi_2,\pi_3\) are critical pairs.

\smallskip
\noindent\emph{Edges.}
For every adjacency in the right table, the two \(E_n\)-comparisons
in~\eqref{eq:product-edge} are
\[
 a_i\leq b_i,\qquad
 a_i\leq b_{i+1},\qquad
 c\leq d,\qquad
 a_i\leq d,
\]
or equalities.  These comparisons follow from the Hasse diagram of
\(E_n\).  Hence
\[
  W_{E_n}\subseteq\Gamma(\Vee\times E_n).
\]

Suppose that \(\varphi\) is a proper \(3\)-coloring of \(W_{E_n}\)
with colors \(0,1,2\).

\medskip
\noindent\textbf{Claim 1.}
\(\varphi(\pi_1)=\varphi(\pi_2)\).

\smallskip
Assume otherwise.  We may take $\varphi(\pi_1)=0$ and $\varphi(\pi_2)=1$.
Every \(\beta_i\) is adjacent to both \(\pi_1\) and \(\pi_2\).
Thus \(\varphi(\beta_i)=2\) for \(1\leq i\leq n+2\).
The vertex \(\pi_3\) is also adjacent to both, so
\(\varphi(\pi_3)=2\).  The right table gives
\(\gamma_1\sim\beta_1\) and
\(\gamma_i\sim\beta_{i-1}\) for \(2\leq i\leq n+3\).  Hence
\(\varphi(\gamma_i)\in\{0,1\}\) for \(1\leq i\leq n+3\).

For \(1\leq i\leq n+2\), the vertex \(\alpha_i\) is adjacent to
\(\gamma_i,\gamma_{i+1},\pi_3\).  The vertices \(\gamma_i\) and
\(\gamma_{i+1}\) have the same color.  Otherwise \(\alpha_i\) would
have neighbors of all three colors.  Thus
\[
  \varphi(\gamma_1)=\cdots=\varphi(\gamma_{n+3})=t
\]
for some \(t\in\{0,1\}\).

Suppose first that \(t=0\).
\[
\begin{aligned}
 \alpha_1\sim\gamma_1,\pi_3
   &\Longrightarrow \varphi(\alpha_1)=1,\\
 \ell_1\sim\alpha_1,\pi_1
   &\Longrightarrow \varphi(\ell_1)=2,\\
 r_1\sim\ell_1,\gamma_1
   &\Longrightarrow \varphi(r_1)=1,\\
 \ell_2\sim r_1,\pi_1
   &\Longrightarrow \varphi(\ell_2)=2,\\
 \ell_3\sim r_1,\beta_1
   &\Longrightarrow \varphi(\ell_3)=0.
\end{aligned}
\]
The neighbors \(\ell_3,\pi_2,\ell_2\) of \(\ell_5\) have colors
\(0,1,2\), a contradiction.

Suppose that \(t=1\).
\[
\begin{aligned}
 \alpha_{n+2}\sim\gamma_{n+2},\pi_3
   &\Longrightarrow \varphi(\alpha_{n+2})=0,\\
 \ell_4\sim\alpha_{n+2},\pi_2
   &\Longrightarrow \varphi(\ell_4)=2,\\
 r_2\sim \ell_4,\gamma_{n+3}
   &\Longrightarrow \varphi(r_2)=0,\\
 \ell_6\sim r_2,\beta_{n+2}
   &\Longrightarrow \varphi(\ell_6)=1.
\end{aligned}
\]

Both \(\ell_2\) and \(\ell_3\) are adjacent to \(\ell_6\), so neither
has color \(1\).  Since \(r_1\sim\gamma_1\), we also have
\(\varphi(r_1)\neq1\).  If \(\varphi(r_1)=0\), then
\(\varphi(\ell_3)=1\), since \(\ell_3\sim r_1,\beta_1\).  If
\(\varphi(r_1)=2\), then \(\varphi(\ell_2)=1\), since
\(\ell_2\sim r_1,\pi_1\).  Both conclusions are impossible.  This proves
Claim~1.

By Claim~1, we may take
\[
  \varphi(\pi_1)=\varphi(\pi_2)=0,
  \qquad
  \varphi(\pi_3)=1.
\]

\medskip
\noindent\textbf{Claim 2.}
\(\varphi(\gamma_i)\in\{0,1\}\) for every \(1\leq i\leq n+3\).

\smallskip
Suppose that \(\varphi(\gamma_j)=2\) for some \(j\).  Fix
\(1\leq i\leq n+2\).  
If \(\varphi(\gamma_i)=2\), then \(\varphi(\beta_i)=1\), since \(\beta_i\sim\gamma_i,\pi_1\).  
The edge
\(\beta_i\sim\gamma_{i+1}\) excludes color \(1\) from
\(\gamma_{i+1}\).  If \(\varphi(\gamma_{i+1})=0\), then \(\alpha_i\)
would have neighbors of all three colors.  Thus
\(\varphi(\gamma_{i+1})=2\).  The reverse implication follows in the
same way.  Therefore
\[
 \varphi(\gamma_i)=2
 \quad\Longleftrightarrow\quad
 \varphi(\gamma_{i+1})=2.
\]
The equivalence holds for every \(i\), so
\[
  \varphi(\gamma_i)=2 \qquad (1\leq i\leq n+3).
\]
The adjacencies \(\beta_i\sim\pi_1,\gamma_i\) imply
\[
  \varphi(\beta_i)=1
  \qquad (1\leq i\leq n+2).
\]

Since \(r_1\sim\gamma_1\), we have
\(\varphi(r_1)\in\{0,1\}\).  If \(\varphi(r_1)=0\), then
\(\varphi(\ell_3)=2\), since \(\ell_3\sim r_1,\beta_1\).  If
\(\varphi(r_1)=1\), then \(\varphi(\ell_2)=2\), since
\(\ell_2\sim r_1,\pi_1\).  Thus at least one of \(\ell_2,\ell_3\) has
color \(2\).

The vertex \(\ell_6\) is adjacent to \(\beta_{n+2}\), which has color
\(1\), and to a vertex of color \(2\) in
\(\{\ell_2,\ell_3\}\).  Thus \(\varphi(\ell_6)=0\).  Each of
\(\ell_2,\ell_3,r_2\) is adjacent to \(\ell_6\), so none has color
\(0\).  Since
\(r_2\sim\gamma_{n+3}\), we have \(\varphi(r_2)=1\).  The neighbors of
\(\ell_5\) include \(\pi_2\) of color \(0\), \(r_2\) of color \(1\),
and a vertex of color \(2\) in \(\{\ell_2,\ell_3\}\).  Thus \(\ell_5\)
has neighbors of all three colors, a contradiction.  This proves Claim~2.

\pagebreak[4]
\medskip
\noindent\textbf{Claim 3.}
One of \(\ell_2,\ell_3\) has color \(1\).

\smallskip
Since \(\ell_1\sim\pi_1\), we have
\(\varphi(\ell_1)\neq0\).  Suppose \(\varphi(r_1)=1\).  By Claim~2 and
\(r_1\sim\gamma_1\), we have \(\varphi(\gamma_1)=0\).  Since
\(\ell_1\sim\pi_1,r_1\), we have \(\varphi(\ell_1)=2\).  Thus
\(\alpha_1\) has neighbors \(\gamma_1,\pi_3,\ell_1\) of colors
\(0,1,2\), a contradiction.

Hence \(\varphi(r_1)\in\{0,2\}\).  If
\(\varphi(r_1)=2\), the neighbors \(\pi_1,r_1\) of
\(\ell_2\) have colors \(0,2\), and \(\varphi(\ell_2)=1\).  If
\(\varphi(r_1)=0\), Claim~2 and \(r_1\sim\gamma_1\) imply
\(\varphi(\gamma_1)=1\).  The adjacencies
\(\beta_1\sim\gamma_1,\pi_1\) and
\(\ell_3\sim r_1,\beta_1\) imply
\(\varphi(\beta_1)=2\) and \(\varphi(\ell_3)=1\).
This proves Claim~3.

\medskip
\noindent\textbf{Claim 4.}
\(\varphi(r_2)\neq1\).

\smallskip
Assume otherwise.  Claim~2 and
\(r_2\sim\gamma_{n+3}\) imply \(\varphi(\gamma_{n+3})=0\).  Since
\(\ell_4\sim r_2,\pi_2\), we have \(\varphi(\ell_4)=2\).  The neighbors
\(\gamma_{n+3},\pi_3,\ell_4\) of \(\alpha_{n+2}\) use colors
\(0,1,2\), a contradiction.  This proves Claim~4.

By Claim~3, one of \(\ell_2,\ell_3\) has color \(1\); both vertices are
adjacent to \(\ell_6\).  Since
\(\beta_{n+2}\sim\pi_1\), we have
\(\varphi(\beta_{n+2})\in\{1,2\}\).  If
\(\varphi(\beta_{n+2})=2\), then \(\ell_6\) has a neighbor of
color \(2\), namely \(\beta_{n+2}\).  Suppose instead that
\(\varphi(\beta_{n+2})=1\).  Claim~2 and
\(\beta_{n+2}\sim\gamma_{n+3}\) imply
\(\varphi(\gamma_{n+3})=0\).  Since \(\ell_4\sim\pi_2\), we have
\(\varphi(\ell_4)\neq0\).  If \(\varphi(\ell_4)=2\), the neighbors
\(\gamma_{n+3},\pi_3,\ell_4\) of \(\alpha_{n+2}\) have colors
\(0,1,2\), a contradiction.  Hence \(\varphi(\ell_4)=1\), and the
neighbors \(\gamma_{n+3},\ell_4\) of \(r_2\) have colors \(0,1\).
Thus \(\varphi(r_2)=2\).  In this case, \(r_2\) is a neighbor of
\(\ell_6\) with color \(2\).

In both cases, \(\ell_6\) has neighbors of colors \(1\) and \(2\), so
\(\varphi(\ell_6)=0\).  Each of
\(\ell_2,\ell_3,r_2,\beta_{n+2}\) is adjacent to \(\ell_6\), so none
has color \(0\).  By Claim~4,
\(\varphi(r_2)=2\).  By Claim~3, one of \(\ell_2,\ell_3\) has color
\(1\).  This vertex, \(\pi_2\), and \(r_2\) are three differently
colored neighbors of \(\ell_5\), a contradiction.
Thus \(W_{E_n}\) is not
\(3\)-colorable.
\end{proof}

\subsection{The family \texorpdfstring{\(F_n\)}{F-n}}

\begin{figure}[!htbp]
\centering
\resizebox{0.70\textwidth}{!}{%
\begin{tikzpicture}[scale=1.1]
\node[pn, label=below:$c$] (c) at (3.5,-1.5) {};
\node[pn, label=below:$a_1$] (a1) at (0.5,0) {};
\node[pn, label=below:$a_2$] (a2) at (1.5,0) {};
\node[pn, label=left:$a_{n+1}$] (an1) at (4,0) {};
\node[pn, label=right:$a_{n+2}$] (an2) at (5,0) {};
\node[pn, label=left:$b_1$] (b1) at (0,2) {};
\node[pn, label=left:$b_2$] (b2) at (1,2) {};
\node[pn, label=left:$b_3$] (b3) at (2,2) {};
\node[pn, label=right:$b_{n+2}$] (bn2) at (4.5,2) {};
\node[pn, label=right:$d$] (d) at (8,1) {};
\node[pn, label=above:$e$] (e) at (3.5,3.5) {};

\draw (a1)--(b1) (a1)--(b2) (a2)--(b2) (a2)--(b3);
\draw (an1)--(bn2) (an2)--(bn2);
\draw (a1)--(c) (a2)--(c) (an1)--(c) (d)--(c);
\draw (b2)--(e) (b3)--(e) (bn2)--(e) (d)--(e);
\draw (b3)--(2.5,1);
\draw (3.5,1)--(an1);
\node at (3,1) {\Large \(\dots\)};
\end{tikzpicture}%
}
\caption{The poset \(F_n\).}
\label{fig:kelly-Fn}
\end{figure}

The following two tables define \(W_{F_n}\).
\par\nopagebreak[4]
\noindent
{\small
\renewcommand{\arraystretch}{1.08}
\begin{minipage}[t]{0.49\textwidth}
\vspace{0pt}
\centering
\begin{tabular}{@{}clc@{}}
\toprule
Vertex & Critical pair & Range\\
\midrule
\(\alpha_i\) & \(\bigl((0,a_i),(q,d)\bigr)\) & \(1\leq i\leq n+2\)\\
\(\beta_i\) & \(\bigl((0,d),(q,b_i)\bigr)\) & \(1\leq i\leq n+2\)\\
\(\pi_1\) & \(\bigl((q,c),(p,e)\bigr)\) & \\
\(\pi_2\) & \(\bigl((q,c),(p,b_1)\bigr)\) & \\
\(\pi_3\) & \(\bigl((q,a_{n+2}),(p,e)\bigr)\) & \\
\(\pi_4\) & \(\bigl((p,c),(q,e)\bigr)\) & \\
\(\ell_1\) & \(\bigl((0,a_{n+2}),(p,b_1)\bigr)\) & \\
\(\ell_2\) & \(\bigl((0,a_{n+2}),(q,b_1)\bigr)\) & \\
\(r_1\) & \(\bigl((0,b_1),(q,e)\bigr)\) & \\
\(r_2\) & \(\bigl((0,c),(q,a_{n+2})\bigr)\) & \\
\bottomrule
\end{tabular}
\end{minipage}
\hfill
\begin{minipage}[t]{0.49\textwidth}
\vspace{0pt}
\centering
\begin{tabular}{@{}lll@{}}
\toprule
Vertices & Adjacent vertices & Range\\
\midrule
\(\alpha_1\) & \(\beta_1,\beta_2,\pi_1,\pi_2\) & \\
\(\alpha_i\) & \(\beta_i,\beta_{i+1},\pi_1\) & \(2\leq i\leq n+1\)\\
\(\alpha_{n+2}\) & \(\beta_{n+2},\pi_1,r_2\) & \\
\(\beta_1\) & \(\pi_1,r_1\) & \\
\(\beta_i\) & \(\pi_1\) & \(2\leq i\leq n+1\)\\
\(\beta_{n+2}\) & \(\pi_1,\pi_3\) & \\
\(\pi_1\) & \(\pi_4,\ell_2\) & \\
\(\pi_2\) & \(\pi_4,r_1\) & \\
\(\pi_3\) & \(\pi_4,r_2\) & \\
\(\pi_4\) & \(\ell_1\) & \\
\(\ell_1,\ell_2\) & \(r_1,r_2\) & \\
\bottomrule
\end{tabular}
\end{minipage}
}

\begin{proposition}\label{prop:F-coloring}
For every \(n\geq0\),
\(W_{F_n}\subseteq\Gamma(\Vee\times F_n)\), and \(W_{F_n}\) is not
\(3\)-colorable.
\end{proposition}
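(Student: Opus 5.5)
The proof has two parts, following the pattern of Proposition~\ref{prop:E-coloring}. First we check that \(W_{F_n}\subseteq\Gamma(\Vee\times F_n)\) using Lemma~\ref{lem:product-critical} and \eqref{eq:product-edge}. Then we show by a short forced-coloring argument that \(W_{F_n}\) is not \(3\)-colorable.

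\emph{Critical pairs.} Reading the Hasse diagram, \(c\) lies below \(a_1,\dots,a_{n+1}\) and \(d\), but not below \(a_{n+2}\). Hence
\[
  \Min(F_n)=\{c,a_{n+2}\},
  \qquad
  \Max(F_n)=\{b_1,e\}.
\]
The pairs \(\pi_1,\dots,\pi_4\) are of type~(ii). Here \(c\) and \(a_{n+2}\) are minimal, \(e\) and \(b_1\) are maximal, and \(c\leq e\), \(c\leq b_1\), \(a_{n+2}\leq e\) all hold, so \eqref{eq:product-critical-second} is satisfied. The pairs \(\ell_1,\ell_2\) have a minimal first coordinate and a maximal second coordinate with \(a_{n+2}\parallel b_1\), so \eqref{eq:product-critical-first} reduces to incomparability. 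For the remaining vertices we verify \eqref{eq:product-critical-first} directly:
\begin{itemize}
\item for \(\alpha_i\): \(a_i\nleq d\), \(\strictdown a_i\subseteq\{c\}\subseteq\downarrow d\), and \(\strictup d=\{e\}\subseteq\uparrow a_i\), because \(a_i\leq b_{i+1}\leq e\) for \(i\leq n+1\) and \(a_{n+2}\leq b_{n+2}\leq e\);
\item for \(\beta_i\): \(d\nleq b_i\), \(\strictdown d=\{c\}\subseteq\downarrow b_i\), and \(\strictup b_i\subseteq\{e\}\subseteq\uparrow d\);
\item for \(r_1\): \(\strictdown b_1=\{a_1,c\}\subseteq\downarrow e\) and \(\strictup e=\varnothing\);
\item for \(r_2\): \(c\nleq a_{n+2}\), \(\strictdown c=\varnothing\), and \(\strictup a_{n+2}=\{b_{n+2},e\}\subseteq\uparrow c\).
\end{itemize}

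\emph{Edges.} For each row of the right table, the two \(F_n\)-comparisons in \(\eqref{eq:product-edge}\) are among
\[
  a_i\leq b_i,\quad a_i\leq b_{i+1},\quad c\leq x \ (x\in\{d,e,b_1,a_i\}),\quad a_i\leq e,\quad d\leq e,\quad b_1\leq e,
\]
or are equalities. The \(\Vee\)-comparisons are immediate from \(0<p\), \(0<q\), or equality. This routine check is the bulk of the work but presents no difficulty.

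\emph{Non-colorability.} Suppose \(\varphi\) is a proper \(3\)-coloring and normalize \(\varphi(\pi_1)=0\). Since \(\pi_1\) is adjacent to every \(\alpha_i\) and every \(\beta_i\), all of them receive colors in \(\{1,2\}\). The path
\[
  \beta_1\alpha_1\beta_2\alpha_2\cdots\beta_{n+2}\alpha_{n+2}
\]
therefore alternates between these two colors. Swapping colors \(1\) and \(2\) keeps \(\varphi(\pi_1)=0\), so we may assume \(\varphi(\beta_i)=1\) and \(\varphi(\alpha_i)=2\) for all \(i\). The edges \(\alpha_1\sim\pi_2\), \(\beta_1\sim r_1\), \(\beta_{n+2}\sim\pi_3\), and \(\alpha_{n+2}\sim r_2\) then give
\[
  \varphi(\pi_2)\in\{0,1\},\quad
  \varphi(r_1)\in\{0,2\},\quad
  \varphi(\pi_3)\in\{0,2\},\quad
  \varphi(r_2)\in\{0,1\}.
\]
Since \(\pi_1\) is adjacent to both \(\pi_4\) and \(\ell_2\), each of them has color \(1\) or \(2\).

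If \(\varphi(\pi_4)=1\), then:
\begin{itemize}
\item \(\pi_2\sim\pi_4\) forces \(\varphi(\pi_2)=0\);
\item \(r_1\sim\pi_2\) then forces \(\varphi(r_1)=2\);
\item \(\ell_2\sim r_1\) forces \(\varphi(\ell_2)=1\);
\item \(r_2\sim\ell_2\) forces \(\varphi(r_2)=0\).
\end{itemize}
Now \(\ell_1\) is adjacent to \(\pi_4,r_1,r_2\), which carry colors \(1,2,0\), a contradiction.

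If \(\varphi(\pi_4)=2\), then:
\begin{itemize}
\item \(\pi_3\sim\pi_4\) forces \(\varphi(\pi_3)=0\);
\item \(r_2\sim\pi_3\) then forces \(\varphi(r_2)=1\);
\item \(\ell_2\sim r_2\) forces \(\varphi(\ell_2)=2\);
\item \(r_1\sim\ell_2\) forces \(\varphi(r_1)=0\).
\end{itemize}
Again \(\ell_1\) is adjacent to three vertices of colors \(2,1,0\), a contradiction.

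The only step needing real care is the membership verification, specifically spotting that \(a_{n+2}\), and not only \(c\), is minimal. This fact is what makes \(\pi_3\), \(\ell_1\), \(\ell_2\), and \(r_2\) critical.
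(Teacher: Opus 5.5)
Your proof is correct and follows the paper's route. It uses the same critical-pair checks via Lemma~\ref{lem:product-critical} and the same normalization: all \(\beta_i\) get one color and all \(\alpha_i\) another, both distinct from \(\varphi(\pi_1)\). You derive this from the alternating path, while the paper uses the common neighbors \(\alpha_i,\pi_1\) of \(\beta_i,\beta_{i+1}\). You then make the same case split on \(\varphi(\pi_4)\), with forced chains ending in a three-colored neighborhood of \(\ell_1\); the paper's second case ends at \(\ell_2\) instead.

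One slip needs fixing. Your list of \(F_n\)-comparisons for the edges includes \(b_1\leq e\), which is false: \(b_1\) is maximal, and this relation would make \(r_1\) a comparable pair. The list also includes \(c\leq a_i\) without excluding \(i=n+2\), where it fails. Conversely, it omits \(c\leq b_i\) for \(i\geq2\), which the edges \(\beta_i\sim\pi_1\) need. No edge actually uses the false relations, and \(c\leq b_i\) does hold, so the edge verification still goes through.
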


\begin{proof}
\smallskip
\noindent\emph{Critical pairs.}
The minimal and maximal elements of \(F_n\) are
\[
 \Min(F_n)=\{c,a_{n+2}\},
 \qquad
 \Max(F_n)=\{b_1,e\}.
\]
For \(1\leq i\leq n+1\), condition~\eqref{eq:product-critical-first}
for \(\alpha_i\) follows from
\[
 a_i\nleq d,\qquad
 \strictdown a_i=\{c\}\subseteq\downarrow d,\qquad
 \strictup d=\{e\}\subseteq\uparrow a_i.
\]
For \(\alpha_{n+2}\), the same condition follows from
\[
 a_{n+2}\nleq d,\qquad
 \strictdown a_{n+2}=\varnothing,\qquad
 \strictup d=\{e\}\subseteq\uparrow a_{n+2}.
\]
For \(\beta_i\), we have
\(d\nleq b_i\) and
\(\strictdown d=\{c\}\subseteq\downarrow b_i\).
If \(i=1\), then \(b_i\) is maximal.  If \(2\leq i\leq n+2\), then
\(\strictup b_i=\{e\}\subseteq\uparrow d\).
Thus condition~\eqref{eq:product-critical-first} also holds for every
\(\beta_i\).
For \(\ell_1\) and \(\ell_2\), the first \(F_n\)-coordinate is
minimal, the second is maximal, and the two are incomparable.
Hence condition~\eqref{eq:product-critical-first} holds for these two
vertices.
For \(r_1,r_2\), condition~\eqref{eq:product-critical-first}
follows from
\[
\begin{array}{llll}
 r_1:& b_1\nleq e,&
 \strictdown b_1=\{c,a_1\}\subseteq\downarrow e,&
 \strictup e=\varnothing,\\
 r_2:& c\nleq a_{n+2},&
 \strictdown c=\varnothing,&
 \strictup a_{n+2}=\{b_{n+2},e\}\subseteq\uparrow c.
\end{array}
\]
By Lemma~\ref{lem:product-critical}\textup{(i)},
\(\alpha_i,\beta_i,\ell_1,\ell_2,r_1,r_2\) are critical pairs.
Since \(c\leq b_1\), \(c\leq e\), and \(a_{n+2}\leq e\),
Lemma~\ref{lem:product-critical}\textup{(ii)} shows that
\(\pi_1,\pi_2,\pi_3,\pi_4\) are critical pairs.

\smallskip
\noindent\emph{Edges.}
By the Hasse diagram of \(F_n\), the two \(F_n\)-comparisons in
\eqref{eq:product-edge} hold for every adjacency in the right table.
Hence
\[
  W_{F_n}\subseteq\Gamma(\Vee\times F_n).
\]

Suppose that \(\varphi\) is a proper \(3\)-coloring of \(W_{F_n}\)
with colors \(0,1,2\).

\medskip
\noindent\textbf{Claim 1.}
All \(\beta_i\) have the same color.  All \(\alpha_i\) have the same
color.  These two colors and \(\varphi(\pi_1)\) are distinct.

\smallskip
Fix \(1\leq i\leq n+1\).  The vertices \(\alpha_i\) and \(\pi_1\)
are adjacent.  Both \(\beta_i\) and \(\beta_{i+1}\) are adjacent to
these two vertices.  Hence
\(\varphi(\beta_i)=\varphi(\beta_{i+1})\).  Applying this equality for
all \(i\) gives
\[
  \varphi(\beta_1)=\cdots=\varphi(\beta_{n+2}).
\]
This color differs from \(\varphi(\pi_1)\).  For
\(1\leq i\leq n+1\), we have \(\alpha_i\sim\beta_i,\pi_1\).
Thus \(\alpha_1,\ldots,\alpha_{n+1}\) have the third color.  The
adjacencies \(\alpha_{n+2}\sim\beta_{n+2},\pi_1\) show that
\(\alpha_{n+2}\) has the same third color.  This proves Claim~1.

By Claim~1, we may take
\[
  \varphi(\beta_i)=0 \quad (1\leq i\leq n+2),\qquad
  \varphi(\alpha_i)=1 \quad (1\leq i\leq n+2),\qquad
  \varphi(\pi_1)=2.
\]
Since \(\pi_4\sim\pi_1\), we have
\(\varphi(\pi_4)\in\{0,1\}\).

\medskip
\noindent\textbf{Claim 2.}
\(\varphi(\pi_4)=1\).

\smallskip
Suppose that \(\varphi(\pi_4)=0\).
\[
\begin{aligned}
 \pi_2\sim \pi_4,\alpha_1
   &\Longrightarrow \varphi(\pi_2)=2,\\
 r_1\sim\pi_2,\beta_1
   &\Longrightarrow \varphi(r_1)=1,\\
 \ell_2\sim r_1,\pi_1
   &\Longrightarrow \varphi(\ell_2)=0,\\
 r_2\sim\ell_2,\alpha_{n+2}
   &\Longrightarrow \varphi(r_2)=2.
\end{aligned}
\]
The neighbors \(\pi_4,r_1,r_2\) of \(\ell_1\) have colors
\(0,1,2\), a contradiction.  This proves Claim~2.

By Claim~2, \(\varphi(\pi_4)=1\).  Then
\[
\begin{aligned}
 \pi_3\sim \pi_4,\beta_{n+2}
   &\Longrightarrow \varphi(\pi_3)=2,\\
 r_2\sim\alpha_{n+2},\pi_3
   &\Longrightarrow \varphi(r_2)=0,\\
 \ell_1\sim \pi_4,r_2
   &\Longrightarrow \varphi(\ell_1)=2,\\
 r_1\sim\beta_1,\ell_1
   &\Longrightarrow \varphi(r_1)=1.
\end{aligned}
\]
The neighbors \(r_2,r_1,\pi_1\) of \(\ell_2\) have colors
\(0,1,2\), a contradiction.  Hence \(W_{F_n}\) is not
\(3\)-colorable.
\end{proof}

\subsection{The families \texorpdfstring{\(G_n\) and \(J_n\)}{G-n and J-n}}

\begin{figure}[!htbp]
\centering
\begin{minipage}[c]{0.47\textwidth}
\centering
\resizebox{!}{0.36\textheight}{%
\begin{tikzpicture}[scale=1.1]
    \node[pn, label=below:$a_1$] (a1) at (0,0) {}; 
    \node[pn, label=left:$a_2$] (a2) at (0,1) {}; 
    \node[pn, label=left:$a_3$] (a3) at (0,2) {}; 
    \node[pn, label=left:$a_4$] (a4) at (0,3) {};
    \node[pn, label=right:$b_1$] (b1) at (3,0.5) {}; 
    \node[pn, label=right:$b_2$] (b2) at (3,1.5) {}; 
    \node[pn, label=right:$b_n$] (bn) at (3,4.5) {}; 
    \node[pn, label=right:$b_{n+1}$] (bn1) at (3,5.5) {};
    \node[pn, label=left:$b_{n+2}$] (bn2) at (3,6.5) {}; 
    \node[pn, label=right:$b_{n+3}$] (bn3) at (3,7.5) {};
    \node[pn, label=left:$a_{n+2}$] (an2) at (0,6) {}; 
    \node[pn, label=left:$a_{n+3}$] (an3) at (0,7) {};
    \node[pn, label=right:$c$] (c) at (5,1.5) {};
    
    \draw (a1)--(a2) (a2)--(a3) (a3)--(a4);
    \draw (b1)--(b2);
    \draw (an2)--(an3);
    \draw (bn)--(bn1) (bn1)--(bn2) (bn2)--(bn3);
    \draw (a1)--(b2); 
    \draw (b1)--(a3); 
    \draw (b2)--(a4); 
    \draw (bn)--(an2);  
    \draw (bn1)--(an3); 
    \draw (an2)--(bn3);
    \draw (a1)--(c) (c)--(bn3);
    
    \draw (a2) -- (2,2);
    \draw[dashed] (2,2) -- (3,2.5); 
    \draw (a3) -- (1, 2.5);
    \draw[dashed] (1, 2.5) -- (2, 3);
    \draw (bn2) -- (1,5.5);
    \draw[dashed] (1,5.5) -- (0,5);
    \draw (bn1) -- (2, 5);
    \draw[dashed] (2, 5) -- (1, 4.5); 
    
    \draw[dashed] (b2) -- (3,2.5);
    \draw[dashed] (a4) -- (0, 3.8);
    \draw[dashed] (a4) -- (1.2, 3.6); 
    \draw[dashed] (an2) -- (0,5);
    \draw[dashed] (bn) -- (3, 3.7); 
    \draw[dashed] (bn) -- (1.8, 3.9); 
    \node[title] at (1.5,-1) {$G_n$};
\end{tikzpicture}
}
\end{minipage}\hfill
\begin{minipage}[c]{0.47\textwidth}
\centering
\resizebox{!}{0.36\textheight}{%
\begin{tikzpicture}[scale=1.1]
    \node[pn, label=below:$a_1$] (a1) at (0,0) {}; 
    \node[pn, label=left:$a_2$] (a2) at (0,1) {}; 
    \node[pn, label=left:$a_3$] (a3) at (0,2) {}; 
    \node[pn, label=left:$a_4$] (a4) at (0,3) {};
    \node[pn, label=right:$b_1$] (b1) at (3,0.5) {}; 
    \node[pn, label=right:$b_2$] (b2) at (3,1.5) {}; 
    \node[pn, label=right:$b_n$] (bn) at (3,4.5) {}; 
    \node[pn, label=right:$b_{n+1}$] (bn1) at (3,5.5) {};
    \node[pn, label=left:$b_{n+2}$] (bn2) at (3,6.5) {}; 
    \node[pn, label=right:$b_{n+3}$] (bn3) at (3,7.5) {};
    \node[pn, label=left:$a_{n+2}$] (an2) at (0,6) {}; 
    \node[pn, label=left:$a_{n+3}$] (an3) at (0,7) {};
    \node[pn, label=right:$c$] (c) at (6,1.5) {}; 
    \node[pn, label=right:$d$] (d) at (6,3.5) {};
    
    \draw (a1)--(a2) (a2)--(a3) (a3)--(a4) (b1)--(b2);
    \draw (an2)--(an3) (bn)--(bn1) (bn1)--(bn2) (bn2)--(bn3);
    \draw (a1)--(b2) (b1)--(a3) (b2)--(a4);
    \draw (bn)--(an2) (bn1)--(an3) (an2)--(bn3);
    \draw (a1)--(d) (c)--(d) (c)--(bn3);
    
    \draw (a2) -- (2,2);
    \draw[dashed] (2,2) -- (3,2.5); 
    \draw (a3) -- (1, 2.5);
    \draw[dashed] (1, 2.5) -- (2, 3);
    \draw (bn2) -- (1,5.5);
    \draw[dashed] (1,5.5) -- (0,5);
    \draw (bn1) -- (2, 5);
    \draw[dashed] (2, 5) -- (1, 4.5); 
    
    \draw[dashed] (b2) -- (3,2.5);
    \draw[dashed] (a4) -- (0, 3.8);
    \draw[dashed] (a4) -- (1.2, 3.6); 
    \draw[dashed] (an2) -- (0,5);
    \draw[dashed] (bn) -- (3, 3.7); 
    \draw[dashed] (bn) -- (1.8, 3.9); 
    
    \node[title] at (1.5,-1) {$J_n$};
\end{tikzpicture}
}
\end{minipage}
\caption{The posets \(G_n\) and \(J_n\).}
\label{fig:kelly-GJ}
\end{figure}

Put \(z=c\) for \(W_{G_n}\) and \(z=d\) for \(W_{J_n}\).  The
following two tables define the two graphs.
\par\nopagebreak[4]
\noindent
{\small
\renewcommand{\arraystretch}{1.08}
\begin{minipage}[t]{0.49\textwidth}
\vspace{0pt}
\centering
\begin{tabular}{@{}clc@{}}
\toprule
Vertex & Critical pair & Range\\
\midrule
\(\alpha_i\) & \(\bigl((0,a_i),(q,b_i)\bigr)\) & \(1\leq i\leq n+3\)\\
\(\beta_i\) & \(\bigl((0,b_i),(q,a_{i+1})\bigr)\) & \(1\leq i\leq n+2\)\\
\(\pi_1\) & \(\bigl((q,a_1),(p,a_{n+3})\bigr)\) & \\
\(\pi_2\) & \(\bigl((q,b_1),(p,b_{n+3})\bigr)\) & \\
\(\pi_3\) & \(\bigl((q,a_1),(p,b_{n+3})\bigr)\) & \\
\(r_1\) & \(\bigl((0,c),(q,a_{n+3})\bigr)\) & \\
\(r_2\) & \(\bigl((0,b_1),(q,z)\bigr)\) & \\
\bottomrule
\end{tabular}
\end{minipage}\hfill
\begin{minipage}[t]{0.49\textwidth}
\vspace{0pt}
\centering
\begin{tabular}{@{}lll@{}}
\toprule
Vertices & Adjacent vertices & Range\\
\midrule
\(\alpha_1\) & \(\beta_1,r_2,\pi_2\) & \\
\(\alpha_i\) & \(\beta_{i-1},\beta_i,\pi_1,\pi_2,\pi_3\) &
  \(2\leq i\leq n+2\)\\
\(\alpha_{n+3}\) & \(\beta_{n+2},r_1,\pi_1\) & \\
\(\beta_1\) & \(\pi_1,\pi_3\) & \\
\(\beta_i\) & \(\pi_1,\pi_2,\pi_3\) & \(2\leq i\leq n+1\)\\
\(\beta_{n+2}\) & \(\pi_2,\pi_3\) & \\
\(r_1\) & \(r_2\) & \\
\(\pi_1\) & \(r_2\) & \\
\(\pi_2\) & \(r_1\) & \\
\(\pi_3\) & \(r_1,r_2\) & \\

\bottomrule
\end{tabular}
\end{minipage}
}

In each graph, the sequence
\[
 \alpha_1,\beta_1,\alpha_2,\beta_2,\ldots,
 \alpha_{n+2},\beta_{n+2},\alpha_{n+3},r_1,r_2,\alpha_1
\]
is a cycle of length \(2n+7\):
\begin{center}
\begin{tikzpicture}[
  every path/.style={line width=1pt},
  wg/.style={circle,draw,fill=white,minimum size=5.5mm,inner sep=0.6pt,
    line width=1pt,font=\scriptsize}
]
  \node[wg] (a1) at (0,1.2) {$\alpha_1$};
  \node[wg] (b1) at (1.3,1.2) {$\beta_1$};
  \node[wg] (a2) at (2.6,1.2) {$\alpha_2$};
  \node at (4.0,1.2) {$\cdots$};
  \node[wg] (bm) at (5.4,1.2) {$\beta_{n+2}$};
  \node[wg] (am) at (6.9,1.2) {$\alpha_{n+3}$};
  \node[wg] (r1) at (6.9,-0.2) {$r_1$};
  \node[wg] (r2) at (0,-0.2) {$r_2$};
  \draw (a1)--(b1)--(a2);
  \draw (a2)--(3.5,1.2) (4.5,1.2)--(bm);
  \draw (bm)--(am)--(r1)--(r2)--(a1);
\end{tikzpicture}
\end{center}
Every vertex of the cycle is adjacent to at least one of
\(\pi_1\) and \(\pi_2\).  The coloring argument uses \(\pi_3\) only
when \(n=0\).

\begin{proposition}\label{prop:GJ-coloring}
For every \(n\geq0\),
\[
  W_{G_n}\subseteq\Gamma(\Vee\times G_n),
  \qquad
  W_{J_n}\subseteq\Gamma(\Vee\times J_n),
\]
and neither \(W_{G_n}\) nor \(W_{J_n}\) is \(3\)-colorable.
\end{proposition}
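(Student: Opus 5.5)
The plan has two parts. First I verify that \(W_{G_n}\) and \(W_{J_n}\) are subgraphs of the respective graphs of critical pairs. Then I give a short coloring argument built on the odd cycle displayed above.

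\emph{Critical pairs and edges.} I read the order from the Hasse diagrams. In both \(G_n\) and \(J_n\), the \(a_i\) form a chain and the \(b_i\) form a chain, with \(a_i<b_{i+1}\) and \(b_i<a_{i+2}\). In \(G_n\) we have \(a_1<c<b_{n+3}\); in \(J_n\) we have \(a_1<d\) and \(c<d\), \(c<b_{n+3}\). Thus \(\Min\) contains \(b_1\) and \(a_1\) (and \(c\) in \(J_n\)), and \(\Max\) contains \(a_{n+3}\) and \(b_{n+3}\) (and \(d\) in \(J_n\)).

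For \(\pi_1,\pi_2,\pi_3\), Lemma~\ref{lem:product-critical}(ii) only needs \(a_1\le a_{n+3}\), \(b_1\le b_{n+3}\) and \(a_1\le b_{n+3}\), together with minimality and maximality. For \(\alpha_i,\beta_i,r_1,r_2\), I check \eqref{eq:product-critical-first} case by case.
\begin{itemize}
\item For \(\alpha_i\): \(a_i\nleq b_i\), and \(\strictdown a_i\subseteq\downarrow b_i\) because everything below \(a_i\) lies below \(b_i\). The upper sets match in the same way.
\item For \(\beta_i\): the same check applies to \(b_i\) and \(a_{i+1}\).
\item For \(r_1\) and \(r_2\): these involve \(c\) or \(z\), and the checks follow from \(\strictup c\) and \(\strictdown z\) being contained in the appropriate up- and down-sets.
\end{itemize}
This case-by-case bookkeeping, including the ends \(i=1\) and \(i=n+3\), is the tedious step; the \(J_n\) versions differ only at \(c,d\). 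The edges then reduce, via \eqref{eq:product-edge}, to comparisons such as \(a_i\le a_{i+1}\), \(b_{i-1}\le b_i\), \(a_i\le b_{n+3}\) and \(b_1\le z\), all visible in the diagrams.

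\emph{Coloring.} Suppose \(\varphi\) is a proper \(3\)-coloring. I first show that \(\varphi(\pi_1)\ne\varphi(\pi_2)\); note that \(\pi_1\) and \(\pi_2\) are not adjacent. If they had the same color, then every vertex of the \((2n+7)\)-cycle would avoid that color, because each cycle vertex is adjacent to \(\pi_1\) or \(\pi_2\). That would properly \(2\)-color an odd cycle, which is impossible. So take \(\varphi(\pi_1)=0\) and \(\varphi(\pi_2)=1\). Then \(\alpha_2\), which is adjacent to both, gets color \(2\).

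\emph{Case \(n\ge1\).} Here \(\beta_2\) is also adjacent to both \(\pi_1\) and \(\pi_2\), so \(\varphi(\beta_2)=2\). But \(\alpha_2\sim\beta_2\), a contradiction.

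\emph{Case \(n=0\).} This is the only place \(\pi_3\) is used. Since \(\pi_3\sim\alpha_2\), we have \(\varphi(\pi_3)\in\{0,1\}\).
\begin{itemize}
\item If \(\varphi(\pi_3)=0\): \(\beta_2\sim\pi_2,\pi_3\) has neighbors colored \(1,0\), so \(\varphi(\beta_2)=2\). This clashes with \(\alpha_2\).
\item If \(\varphi(\pi_3)=1\): \(\beta_1\sim\pi_1,\pi_3\) has neighbors colored \(0,1\), so \(\varphi(\beta_1)=2\). Again this clashes with \(\alpha_2\).
\end{itemize}
Hence neither graph is \(3\)-colorable.

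I expect the main obstacle to be the verification of critical pairs at the boundary indices and at \(c\) versus \(d\), not the coloring argument.
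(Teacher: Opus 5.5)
Your proof is correct and follows the paper's argument. You use the same odd-cycle claim to force \(\varphi(\pi_1)\neq\varphi(\pi_2)\), then get \(\varphi(\alpha_2)=2\) and a contradiction at \(\beta_2\) for \(n\geq1\); for \(n=0\), your case split on \(\varphi(\pi_3)\) merely reorders the paper's forced colors \(\varphi(\beta_1)=1\), \(\varphi(\beta_2)=0\).

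There are two small slips in the bookkeeping. The edge comparison you list as \(b_1\le z\) should be \(a_1\le z\), which is what the edges from \(r_2\) to \(\alpha_1,\pi_1,\pi_3\) need; in fact \(b_1\nleq z\) is exactly what makes \(r_2\) critical. Also, for \(r_1,r_2\) the sets to check are \(\strictdown c\subseteq\downarrow a_{n+3}\) and \(\strictup z\subseteq\uparrow b_1\), not \(\strictup c\) and \(\strictdown z\). These agree with what you wrote only in \(G_n\), where \(z=c\); in \(J_n\) both are incomparable min--max pairs, so the check is trivial.
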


\begin{proof}
\smallskip
\noindent\emph{Critical pairs.}  Fix \(R\in\{G_n,J_n\}\).
The minimal and maximal elements are
\[
\begin{aligned}
 \Min(G_n)&=\{a_1,b_1\},&
 \Max(G_n)&=\{a_{n+3},b_{n+3}\},\\
 \Min(J_n)&=\{a_1,b_1,c\},&
 \Max(J_n)&=\{a_{n+3},b_{n+3},d\}.
\end{aligned}
\]
In both \(G_n\) and \(J_n\),
\[
 a_i\leq_R b_j\iff j\geq i+1,
 \qquad
 b_i\leq_R a_j\iff j\geq i+2.
\]
By these relations, condition~\eqref{eq:product-critical-first} holds
for \(\alpha_i\) and \(\beta_i\):
\[
\begin{array}{llll}
 \alpha_i:& a_i\nleq_R b_i,&
 \strictdown a_i\subseteq\downarrow b_i,&
 \strictup b_i\subseteq\uparrow a_i,\\
 \beta_i:& b_i\nleq_R a_{i+1},&
 \strictdown b_i\subseteq\downarrow a_{i+1},&
 \strictup a_{i+1}\subseteq\uparrow b_i.
\end{array}
\]
For \(G_n\), condition~\eqref{eq:product-critical-first} for
\(r_1,r_2\) follows from
\[
\begin{array}{llll}
 r_1:& c\nleq a_{n+3},&
 \strictdown c=\{a_1\}\subseteq\downarrow a_{n+3},&
 \strictup a_{n+3}=\varnothing,\\
 r_2:& b_1\nleq c,&
 \strictdown b_1=\varnothing,&
 \strictup c=\{b_{n+3}\}\subseteq\uparrow b_1.
\end{array}
\]
For \(J_n\), the two coordinate pairs used by \(r_1,r_2\) are
\(c\parallel a_{n+3}\) and \(b_1\parallel d\).  Each is an
incomparable pair in \(\Min(J_n)\times\Max(J_n)\), so
condition~\eqref{eq:product-critical-first} holds.
By Lemma~\ref{lem:product-critical}\textup{(i)},
\(\alpha_i,\beta_i,r_1,r_2\) are critical pairs.  Since
\(a_1\leq_R a_{n+3}\), \(b_1\leq_R b_{n+3}\), and
\(a_1\leq_R b_{n+3}\),
Lemma~\ref{lem:product-critical}\textup{(ii)} shows that
\(\pi_1,\pi_2,\pi_3\) are critical pairs.

\smallskip
\noindent\emph{Edges.}
By the two Hasse diagrams, all comparisons in
\eqref{eq:product-edge} hold for the adjacencies in the right table.
Hence
\[
  W_{G_n}\subseteq\Gamma(\Vee\times G_n),
  \qquad
  W_{J_n}\subseteq\Gamma(\Vee\times J_n).
\]

Let \(W\) be either \(W_{G_n}\) or \(W_{J_n}\), and suppose that
\(\varphi\) is a proper \(3\)-coloring of \(W\) with colors \(0,1,2\).

\medskip
\noindent\textbf{Claim.}
\(\varphi(\pi_1)\neq\varphi(\pi_2)\).

\smallskip
Assume that
\(\varphi(\pi_1)=\varphi(\pi_2)\).  Every vertex of the odd cycle is
adjacent to at least one of \(\pi_1,\pi_2\), so no vertex of the cycle
can receive their common color.  Hence the cycle uses only the other two
colors.  This would be a proper \(2\)-coloring of an odd cycle, which is
impossible.
This proves the claim.

By the claim, we may take
\[
 \varphi(\pi_1)=0,
 \qquad
 \varphi(\pi_2)=1.
\]
Then
\[
\begin{aligned}
 \alpha_2\sim\pi_1,\pi_2
   &\Longrightarrow \varphi(\alpha_2)=2,\\
 \beta_1\sim\pi_1,\alpha_2
   &\Longrightarrow \varphi(\beta_1)=1.
\end{aligned}
\]
If \(n\geq1\), the neighbors \(\pi_1,\pi_2,\alpha_2\) of
\(\beta_2\) have colors \(0,1,2\), a contradiction.

Let \(n=0\).  We have
\[
 \beta_2\sim\pi_2,\alpha_2
 \quad\Longrightarrow\quad
 \varphi(\beta_2)=0.
\]
The neighbors \(\beta_2,\beta_1,\alpha_2\) of \(\pi_3\) have colors
\(0,1,2\), again a contradiction.
\end{proof}

\subsection{The family \texorpdfstring{\(H_n\)}{H-n}}

\begin{figure}[htbp]
\centering
\resizebox{!}{0.38\textheight}{%
\begin{tikzpicture}[scale=1.1]
    \node[pn, label=below:$a_1$] (a1) at (0,0) {}; 
    \node[pn, label=left:$a_2$] (a2) at (0,1) {}; 
    \node[pn, label=left:$a_3$] (a3) at (0,2) {}; 
    \node[pn, label=left:$a_4$] (a4) at (0,3) {};
    \node[pn, label=right:$b_1$] (b1) at (3,0) {}; 
    \node[pn, label=right:$b_2$] (b2) at (3,1) {}; 
    \node[pn, label=right:$b_3$] (b3) at (3,2) {}; 
    \node[pn, label=right:$b_n$] (bn) at (3,5.25) {}; 
    \node[pn, label=right:$b_{n+1}$] (bn1) at (3,6.25) {};
    \node[pn, label=left:$b_{n+2}$] (bn2) at (3,7.25) {}; 
    \node[pn, label=above:$b_{n+3}$] (bn3) at (3,8.25) {};
    \node[pn, label=left:$a_{n+1}$] (an1) at (0,5.75) {}; 
    \node[pn, label=left:$a_{n+2}$] (an2) at (0,6.75) {};
    \node[pn, label=right:$c$] (c) at (5,1) {};
    \node[pn, label=right:$d$] (d) at (5,7.25) {};
    
    \draw (a1)--(a2) (a2)--(a3) (a3)--(a4);
    \draw (b1)--(b2) (b2)--(b3);
    \draw (an1)--(an2);
    \draw (bn)--(bn1) (bn1)--(bn2) (bn2)--(bn3);
    \draw (a1)--(b3);
    \draw (b1)--(a2) (b2)--(a3) (b3)--(a4);
    \draw (an1)--(bn3); 
    \draw (bn)--(an1) (bn1)--(an2); 
    \draw (b1)--(c);
    \draw (d)--(bn3);
    
    \draw (a2) -- (1.5, 2);
    \draw[dashed] (1.5, 2) -- (2.25, 2.5);
    \draw (a3) -- (1.5, 3);
    \draw[dashed] (1.5, 3) -- (2.25, 3.5);
    \draw (a4) -- (1.5, 4);
    \draw[dashed] (1.5, 4) -- (2.25, 4.5);
    
    \draw (bn2) -- (1.5, 6);
    \draw[dashed] (1.5, 6) -- (0.75, 5.375);
    \draw (bn1) -- (1.5, 5);
    \draw[dashed] (1.5, 5) -- (0.75, 4.375);
    
    \draw[dashed] (a4) -- (0, 3.8);
    \draw[dashed] (b3) -- (3, 2.8);
    \draw[dashed] (an1) -- (0, 4.95);
    \draw[dashed] (bn) -- (3, 4.45);
    
\end{tikzpicture}
}
\caption{The poset \(H_n\).}
\label{fig:kelly-Hn}
\end{figure}

The following two tables define \(W_{H_n}\).
\par\nopagebreak[4]
\noindent
{\small
\renewcommand{\arraystretch}{1.08}
\begin{minipage}[t]{0.49\textwidth}
\vspace{0pt}
\centering
\begin{tabular}{@{}clc@{}}
\toprule
Vertex & Critical pair & Range\\
\midrule
\(\alpha_i\) & \(\bigl((0,a_i),(q,b_{i+1})\bigr)\) &
  \(1\leq i\leq n+2\)\\
\(\beta_i\) & \(\bigl((0,b_i),(q,a_i)\bigr)\) &
  \(1\leq i\leq n+2\)\\
\(\pi_1\) & \(\bigl((p,b_1),(q,b_{n+3})\bigr)\) & \\
\(\pi_2\) & \(\bigl((q,a_1),(p,b_{n+3})\bigr)\) & \\
\(\pi_3\) & \(\bigl((q,b_1),(p,a_{n+2})\bigr)\) & \\
\(\pi_4\) & \(\bigl((q,b_1),(p,b_{n+3})\bigr)\) & \\
\(\ell_1\) & \(\bigl((0,d),(p,a_{n+2})\bigr)\) & \\
\(\ell_2\) & \(\bigl((0,d),(q,a_{n+2})\bigr)\) & \\
\(\ell_3\) & \(\bigl((0,d),(p,c)\bigr)\) & \\
\(\ell_4\) & \(\bigl((0,d),(q,c)\bigr)\) & \\
\(\ell_5\) & \(\bigl((0,a_1),(p,c)\bigr)\) & \\
\(\ell_6\) & \(\bigl((0,a_1),(q,c)\bigr)\) & \\
\(r_1\) & \(\bigl((0,b_1),(q,d)\bigr)\) & \\
\(r_2\) & \(\bigl((0,c),(q,b_{n+3})\bigr)\) & \\
\bottomrule
\end{tabular}
\end{minipage}\hfill
\begin{minipage}[t]{0.49\textwidth}
\vspace{0pt}
\centering
\begin{tabular}{@{}lll@{}}
\toprule
Vertices & Adjacent vertices & Range\\
\midrule
\(\alpha_1\) & \(\beta_1,\beta_2,\pi_3,\pi_4\) & \\
\(\alpha_i\) & \(\beta_i,\beta_{i+1},\pi_2,\pi_3,\pi_4\) &
  \(2\leq i\leq n+1\)\\
\(\alpha_{n+2}\) & \(\beta_{n+2},\pi_3,\ell_1,\ell_2\) & \\
\(\beta_1\) & \(\pi_2,\ell_5,\ell_6\) & \\
\(\beta_i\) & \(\pi_2,\pi_3,\pi_4\) & \(2\leq i\leq n+1\)\\
\(\beta_{n+2}\) & \(\pi_2,\pi_4\) & \\
\(\pi_1\) & \(\pi_2,\pi_3,\pi_4,\ell_1,\ell_3,\ell_5\) & \\
\(\pi_2\) & \(\ell_2\) & \\
\(\pi_3\) & \(\ell_6\) & \\
\(\pi_4\) & \(\ell_2,\ell_4,\ell_6\) & \\
\(\ell_1,\ell_2\) & \(r_1\) & \\
\(\ell_3,\ell_4\) & \(r_1,r_2\) & \\
\(\ell_5,\ell_6\) & \(r_2\) & \\
\bottomrule
\end{tabular}
\end{minipage}
}

\begin{proposition}\label{prop:H-coloring}
For every \(n\geq0\),
\(W_{H_n}\subseteq\Gamma(\Vee\times H_n)\), and \(W_{H_n}\) is not
\(3\)-colorable.
\end{proposition}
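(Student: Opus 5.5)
The plan follows the pattern of the earlier propositions. First I verify the inclusion \(W_{H_n}\subseteq\Gamma(\Vee\times H_n)\), then I show by a short case analysis that no proper \(3\)-coloring exists.

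\emph{Inclusion.} For the critical pairs I will use Lemma~\ref{lem:product-critical}.

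\begin{itemize}[leftmargin=*]
\item \(\pi_1,\dots,\pi_4\) fall under part~(ii). It suffices to note \(b_1,a_1\in\Min(H_n)\), \(b_{n+3},a_{n+2}\in\Max(H_n)\), \(b_1\le b_{n+3}\), \(a_1\le b_{n+3}\) and \(b_1\le a_{n+2}\).
\item \(\ell_1,\dots,\ell_6,r_1,r_2\) fall under part~(i). Here \(d,a_1,b_1,c\) are minimal and \(a_{n+2},c,d,b_{n+3}\) are maximal, or the condition follows from \(\strictdown d=\{b_{n+3}\}\), \(\strictup c=\varnothing\) and similar. Each case then reduces to an incomparability read from the Hasse diagram.
\item \(\alpha_i,\beta_i\) also fall under part~(i). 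I will first record the comparabilities between the two chains, namely \(b_i\le a_j\iff j\ge i+1\) and \(a_i\le b_j\iff j\ge i+2\). From these the three conditions in \eqref{eq:product-critical-first} follow as in the \(G_n,J_n\) case.
\end{itemize}

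Edges are then checked through \eqref{eq:product-edge} using only these comparabilities together with \(b_1\le c\) and \(b_{n+3}\le d\).

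\emph{Non-colorability.} Suppose \(\varphi\) is a proper \(3\)-coloring, and normalize \(\varphi(\pi_1)=0\). Since \(\pi_1\sim\pi_2,\pi_3,\pi_4\), the vertices \(\pi_2,\pi_3,\pi_4\) take colors in \(\{1,2\}\).

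The first claim is that \(\pi_2,\pi_3,\pi_4\) do not all share one color. The heuristic behind it: if they did, say color \(1\), the alternating path \(\alpha_1,\beta_2,\alpha_2,\dots\) would be forced into \(\{0,2\}\) wherever it touches them. This pins the parities of \(\beta_1,\alpha_{n+2}\), and the \(\ell\)'s and \(r\)'s attached at the ends close an odd cycle. The path passes through \(\alpha_{n+2}\sim\ell_1,\ell_2\), then \(\ell_1,\ell_2\sim r_1\), then \(r_1\sim\ell_3,\ell_4\sim r_2\), then \(r_2\sim\ell_5,\ell_6\sim\beta_1\). Along it, \(\ell_1,\ell_3,\ell_5\) see \(\pi_1\), and \(\ell_2,\ell_4,\ell_6\) see \(\pi_4\) or \(\pi_2\).

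The second claim is the complementary one. If \(\pi_2,\pi_3,\pi_4\) use both colors \(1\) and \(2\), then every middle \(\alpha_i,\beta_i\) (with \(2\le i\le n+1\)) is adjacent to all three and must get color \(0\). Since \(\alpha_i\sim\beta_i\), this is immediate when \(n\ge1\).

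The case \(n=0\) has no middle vertices, so that step must be replaced by a short forcing chain from the end vertices. I would split on which of \(\pi_2,\pi_3,\pi_4\) is the odd one out, giving three subcases. In each I propagate forced colors as in the proof of Proposition~\ref{prop:F-coloring}:
\begin{itemize}[leftmargin=*]
\item \(\alpha_1\sim\pi_3,\pi_4\),
\item \(\beta_{n+2}\sim\pi_2,\pi_4\),
\item \(\beta_1\sim\pi_2\),
\item \(\ell_6\sim\pi_3,\pi_4\),
\item \(\ell_2\sim\pi_2,\pi_4\).
\end{itemize}
The goal in each subcase is a vertex such as \(\ell_5\), \(\ell_1\) or \(r_2\) that sees all three colors.

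The main obstacle is this end-gadget bookkeeping: the \(n=0\) subcases, and the odd-cycle argument in the first claim. For the first claim I would try to exhibit explicitly an odd cycle avoiding color \(1\), in the spirit of the cycle used for \(G_n,J_n\). If that fails, I would fall back to explicit forcing chains, with the three \(\ell\)-pairs playing the role that \(\ell_1,\dots,\ell_6\) played for \(E_n\).
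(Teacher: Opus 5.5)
\textbf{Where the proposal stands.} Your decomposition is the paper's: normalize $\varphi(\pi_1)=0$, then separately exclude the case where $\pi_2,\pi_3,\pi_4$ use both colors $1,2$ and the case where they share one color. Your mixed-case argument for $n\ge1$ is correct and shorter than the paper's. Indeed $\alpha_{n+1}\sim\beta_{n+1}$, and both are adjacent to all of $\pi_2,\pi_3,\pi_4$, so all three mixed patterns fail at once; the paper instead runs a long forcing chain in its Case~2 for every $n$. Beyond that, however, the proposal is not yet a proof. As written, you have excluded only the mixed case with $n\ge1$.

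\textbf{The all-equal case.} The one concrete mechanism you offer cannot work. Suppose $\varphi(\pi_2)=\varphi(\pi_3)=\varphi(\pi_4)=1$. The vertices of $W_{H_n}$ adjacent to at least one of $\pi_2,\pi_3,\pi_4$ are $\pi_1$, the $\alpha_i,\beta_i$, and $\ell_2,\ell_4,\ell_6$. They induce only the path $\ell_6,\beta_1,\alpha_1,\beta_2,\ldots,\beta_{n+2},\alpha_{n+2},\ell_2$, plus the isolated vertices $\pi_1,\ell_4$. This is bipartite, so every odd cycle passes through $r_1$, $r_2$, or one of $\ell_1,\ell_3,\ell_5$; an example is the cycle of length $2n+9$ closed through $\ell_2,r_1,\ell_4,r_2,\ell_6$. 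None of those vertices is kept off color $1$. For instance, when $\varphi(\beta_1)=0$, the value $\varphi(r_2)=1$ is excluded only after looking at $r_1$.

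What is needed is a case analysis. Your parity observation gives a common color $b\in\{0,2\}$ on all $\beta_i$. Then $\alpha_{n+2}\sim\beta_{n+2},\ell_2,\pi_3$ forces $\varphi(\ell_2)=b$. One then splits on $b$ (and, for $b=0$, on $\varphi(r_2)\in\{0,1\}$) until $r_1$ sees three colors among $\ell_1,\ldots,\ell_4$.

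\textbf{The mixed case for $n=0$.} Only the pattern $(\varphi(\pi_2),\varphi(\pi_3),\varphi(\pi_4))=(1,1,2)$ closes in one step, via $\beta_2$ and then $\alpha_1$. The pattern $(1,2,2)$ needs a chain through $\beta_2,\ell_2,\alpha_2,\ell_1,r_1,\ell_3,\ell_4,r_2,\beta_1$, ending at $\ell_5$ or $\ell_6$. The pattern $(1,2,1)$ needs a chain through $\alpha_1,\beta_1,\beta_2,r_2,\ell_3,\ell_4,r_1,\ell_1,\ell_2$, ending at $\alpha_2$. ``Propagate forced colors'' describes these chains but does not prove them.

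\textbf{An orientation error in the inclusion step.} In $H_n$ one has $d<b_{n+3}$ and $b_1<c$. So $d\in\Min(H_n)$ with $\strictup d=\{b_{n+3}\}$, and $c\in\Max(H_n)$ with $\strictdown c=\{b_1\}$. Your ``$\strictdown d=\{b_{n+3}\}$'' and ``$b_{n+3}\le d$'' are backwards, and your lists put $c$ and $d$ among both the minimal and the maximal elements.

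With your orientation, $b_1\le b_{n+3}\le d$ would make $r_1$ a comparable pair. Also $b_{n+3}$ would not be maximal, so $\pi_1,\pi_2,\pi_4$ would fail Lemma~\ref{lem:product-critical}\textup{(ii)}. Correctly, $r_1$ and $r_2$ are not pairs from $\Min(H_n)\times\Max(H_n)$. They need $\strictup d\subseteq\uparrow b_1$ and $\strictdown c\subseteq\downarrow b_{n+3}$ respectively. Edges such as $\ell_1\sim\pi_1$ and $\ell_1\sim\alpha_{n+2}$ use $d\le b_{n+3}$.
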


\begin{proof}
\smallskip
\noindent\emph{Critical pairs.}
The minimal and maximal elements of \(H_n\) are
\[
 \Min(H_n)=\{a_1,b_1,d\},
 \qquad
 \Max(H_n)=\{a_{n+2},b_{n+3},c\}.
\]
In \(H_n\),
\[
 a_i\leq b_j\iff j\geq i+2,
 \qquad
 b_i\leq a_j\iff j\geq i+1.
\]
By these formulas, condition~\eqref{eq:product-critical-first} holds
for \(\alpha_i\) and \(\beta_i\):
\[
\begin{array}{llll}
 \alpha_i:& a_i\nleq b_{i+1},&
 \strictdown a_i\subseteq\downarrow b_{i+1},&
 \strictup b_{i+1}\subseteq\uparrow a_i,\\
 \beta_i:& b_i\nleq a_i,&
 \strictdown b_i\subseteq\downarrow a_i,&
 \strictup a_i\subseteq\uparrow b_i.
\end{array}
\]
For each of \(\ell_1,\ldots,\ell_6\), the first \(H_n\)-coordinate is
minimal, the second is maximal, and the two are incomparable.  Thus
condition~\eqref{eq:product-critical-first} holds for these six
vertices.  For \(r_1,r_2\), the same condition follows from
\[
\begin{array}{llll}
 r_1:& b_1\nleq d,&
 \strictdown b_1=\varnothing,&
 \strictup d=\{b_{n+3}\}\subseteq\uparrow b_1,\\
 r_2:& c\nleq b_{n+3},&
 \strictdown c=\{b_1\}\subseteq\downarrow b_{n+3},&
 \strictup b_{n+3}=\varnothing.
\end{array}
\]
By Lemma~\ref{lem:product-critical}\textup{(i)},
\(\alpha_i,\beta_i,\ell_1,\ldots,\ell_6,r_1,r_2\) are critical pairs.
Since \(a_1\leq b_{n+3}\), \(b_1\leq a_{n+2}\), and
\(b_1\leq b_{n+3}\),
Lemma~\ref{lem:product-critical}\textup{(ii)} shows that
\(\pi_1,\pi_2,\pi_3,\pi_4\) are critical pairs.

\smallskip
\noindent\emph{Edges.}
By the Hasse diagram of \(H_n\), the two comparisons in
\eqref{eq:product-edge} hold for every adjacency in the right table.
Hence
\[
 W_{H_n}\subseteq\Gamma(\Vee\times H_n).
\]

Suppose that \(\varphi\) is a proper \(3\)-coloring of \(W_{H_n}\) with
colors \(0,1,2\).  We may take \(\varphi(\pi_1)=0\).  Since
\(\pi_1\sim\pi_2,\pi_3,\pi_4\), we have
\[
 \varphi(\pi_2),\varphi(\pi_3),\varphi(\pi_4)\in\{1,2\}.
\]

\medskip
\noindent\textbf{Claim.}
The vertices \(\pi_2,\pi_3,\pi_4\) have the same color.

\smallskip
Assume that \(\pi_2,\pi_3,\pi_4\) do not all have the same color.  We may take
\(\varphi(\pi_2)=1\).  We consider the three possible pairs
\((\varphi(\pi_3),\varphi(\pi_4))\).

\smallskip
\noindent\emph{Case 1: \((1,2)\).}
\[
 \beta_{n+2}\sim\pi_2,\pi_4
 \quad\Longrightarrow\quad
 \varphi(\beta_{n+2})=0.
\]
If \(n\geq1\), the neighbors
\(\beta_{n+2},\pi_2,\pi_4\) of \(\alpha_{n+1}\) have colors
\(0,1,2\).  If \(n=0\), the neighbors
\(\beta_2,\pi_3,\pi_4\) of \(\alpha_1\) have colors \(0,1,2\).
Both alternatives are impossible.

\smallskip
\noindent\emph{Case 2: \((2,2)\).}
\[
\begin{aligned}
 \beta_{n+2}\sim\pi_2,\pi_4
   &\Longrightarrow \varphi(\beta_{n+2})=0,\\
 \ell_2\sim\pi_2,\pi_4
   &\Longrightarrow \varphi(\ell_2)=0,\\
 \alpha_{n+2}\sim\beta_{n+2},\pi_3
   &\Longrightarrow \varphi(\alpha_{n+2})=1,\\
 \ell_1\sim\alpha_{n+2},\pi_1
   &\Longrightarrow \varphi(\ell_1)=2,\\
 r_1\sim\ell_1,\ell_2
   &\Longrightarrow \varphi(r_1)=1,\\
 \ell_3\sim r_1,\pi_1
   &\Longrightarrow \varphi(\ell_3)=2,\\
 \ell_4\sim r_1,\pi_4
   &\Longrightarrow \varphi(\ell_4)=0,\\
 r_2\sim\ell_3,\ell_4
   &\Longrightarrow \varphi(r_2)=1.
\end{aligned}
\]
The edge \(\beta_1\sim\pi_2\) excludes color \(1\) from
\(\beta_1\).  If \(\varphi(\beta_1)=0\), the neighbors
\(\beta_1,r_2,\pi_3\) of \(\ell_6\) have colors \(0,1,2\).
If \(\varphi(\beta_1)=2\), the neighbors
\(\pi_1,r_2,\beta_1\) of \(\ell_5\) have colors \(0,1,2\).
Both choices are impossible.

\smallskip
\noindent\emph{Case 3: \((2,1)\).}
Suppose first that \(n\geq1\).  We have
\[
 \beta_{n+1}\sim\pi_2,\pi_3
 \quad\Longrightarrow\quad
 \varphi(\beta_{n+1})=0.
\]
The neighbors
\(\beta_{n+1},\pi_2,\pi_3\) of \(\alpha_{n+1}\) have colors
\(0,1,2\), a contradiction.

Suppose \(n=0\).  Then
\[
\begin{aligned}
 \alpha_1\sim\pi_3,\pi_4
   &\Longrightarrow \varphi(\alpha_1)=0,\\
 \beta_1\sim\alpha_1,\pi_2
   &\Longrightarrow \varphi(\beta_1)=2,\\
 \beta_2\sim\alpha_1,\pi_2
   &\Longrightarrow \varphi(\beta_2)=2.
\end{aligned}
\]
If \(\varphi(r_2)=1\), the neighbors \(\pi_1,r_2,\beta_1\) of
\(\ell_5\) have colors \(0,1,2\).  If \(\varphi(r_2)=0\), the neighbors
\(r_2,\pi_4,\beta_1\) of \(\ell_6\) have colors \(0,1,2\).  Thus
\(\varphi(r_2)=2\).  We have
\[
\begin{aligned}
 \ell_3\sim r_2,\pi_1
   &\Longrightarrow \varphi(\ell_3)=1,\\
 \ell_4\sim r_2,\pi_4
   &\Longrightarrow \varphi(\ell_4)=0,\\
 r_1\sim\ell_3,\ell_4
   &\Longrightarrow \varphi(r_1)=2,\\
 \ell_1\sim r_1,\pi_1
   &\Longrightarrow \varphi(\ell_1)=1,\\
 \ell_2\sim r_1,\pi_2
   &\Longrightarrow \varphi(\ell_2)=0.
\end{aligned}
\]
The neighbors \(\ell_2,\ell_1,\beta_2\) of \(\alpha_2\) have
colors \(0,1,2\), a contradiction.

All three cases are impossible. This proves the claim.
We may take
\[
 \varphi(\pi_2)=\varphi(\pi_3)=\varphi(\pi_4)=1.
\]
Every \(\beta_i\) is adjacent to \(\pi_2\), which has color \(1\).
Hence \(\varphi(\beta_i)\in\{0,2\}\).  For
\(1\leq i\leq n+1\), the vertex \(\alpha_i\) is adjacent to
\(\beta_i,\beta_{i+1},\pi_3\).  If \(\beta_i\) and
\(\beta_{i+1}\) had different colors, no color would be available for
\(\alpha_i\).  Thus \(\varphi(\beta_i)=\varphi(\beta_{i+1})\) for
every \(1\leq i\leq n+1\), and
\[
  \varphi(\beta_1)=\cdots=\varphi(\beta_{n+2})=b,
  \qquad b\in\{0,2\}.
\]
Both \(\beta_{n+2}\) and \(\ell_2\) are adjacent to \(\pi_2\), so
their colors belong to \(\{0,2\}\).  If their colors were different,
no color would be available for \(\alpha_{n+2}\), which is adjacent to
\(\beta_{n+2},\ell_2,\pi_3\).  Hence
\(\varphi(\ell_2)=\varphi(\beta_{n+2})=b\).

Suppose first that \(b=2\).  If \(\varphi(r_2)=0\), the neighbors
\(r_2,\pi_3,\beta_1\) of \(\ell_6\) have colors \(0,1,2\).  If
\(\varphi(r_2)=1\), the neighbors \(\pi_1,r_2,\beta_1\) of
\(\ell_5\) have colors \(0,1,2\).  Thus \(\varphi(r_2)=2\), and
\[
\begin{aligned}
 \ell_3\sim r_2,\pi_1
   &\Longrightarrow \varphi(\ell_3)=1,\\
 \ell_4\sim r_2,\pi_4
   &\Longrightarrow \varphi(\ell_4)=0.
\end{aligned}
\]
The neighbors \(\ell_4,\ell_3,\ell_2\) of \(r_1\) have colors
\(0,1,2\), a contradiction.

Suppose \(b=0\).  We have
\[
\begin{aligned}
 \alpha_{n+2}\sim\beta_{n+2},\ell_2,\pi_3
   &\Longrightarrow \varphi(\alpha_{n+2})=2,\\
 \ell_1\sim\alpha_{n+2},\pi_1
   &\Longrightarrow \varphi(\ell_1)=1.
\end{aligned}
\]
If \(\varphi(r_2)=2\), the neighbors \(\beta_1,\pi_3,r_2\) of
\(\ell_6\) have colors \(0,1,2\).  Hence
\(\varphi(r_2)\in\{0,1\}\).

If \(\varphi(r_2)=0\), then
\[
 \ell_4\sim r_2,\pi_4
 \quad\Longrightarrow\quad
 \varphi(\ell_4)=2.
\]
The neighbors \(\ell_2,\ell_1,\ell_4\) of \(r_1\) have colors
\(0,1,2\), a contradiction.  If \(\varphi(r_2)=1\), then
\[
 \ell_3\sim r_2,\pi_1
 \quad\Longrightarrow\quad
 \varphi(\ell_3)=2.
\]
The neighbors \(\ell_2,\ell_1,\ell_3\) of \(r_1\) again have colors
\(0,1,2\), a contradiction.

Thus \(W_{H_n}\) is not \(3\)-colorable.
\end{proof}

\subsection{The family \texorpdfstring{\(I_n\)}{I-n}}

\begin{figure}[htbp]
\centering
\resizebox{0.76\textwidth}{!}{%
\begin{tikzpicture}[scale=1.1]
    \node[pn, label=below:$a_1$] (a1) at (0,0) {}; 
    \node[pn, label=below:$a_2$] (a2) at (2,0) {}; 
    \node[pn, label=below:$a_3$] (a3) at (4,0) {}; 
    \node[pn, label=below:$a_{n+2}$] (an2) at (7,0) {};
    \node[pn, label=left:$b_1$] (b1) at (-1,1.5) {}; 
    \node[pn, label=left:$b_2$] (b2) at (1,1.5) {}; 
    \node[pn, label=right:$b_3$] (b3) at (3,1.5) {}; 
    \node[pn, label=right:$b_{n+2}$] (bn2) at (6,1.5) {};
    \node[pn, label=right:$b_{n+3}$] (bn3) at (8,1.5) {};
    \node[pn, label=above:$d_1$] (d1) at (2,3.5) {}; 
    \node[pn, label=above:$d_2$] (d2) at (5,3.5) {};
    \node[pn, label=above:$c$] (c) at (3.5,3) {};
    
    \draw (a1)--(b1) (a1)--(b2) (a2)--(b2) (a2)--(b3) (a3)--(b3);
    \draw (an2)--(bn2) (an2)--(bn3);
    \draw (b1)--(d1) (b2)--(d1) (b3)--(d1) (bn2)--(d1) (c)--(d1);
    \draw (b2)--(d2) (b3)--(d2) (bn2)--(d2) (bn3)--(d2) (c)--(d2);
    
    \draw (a3) -- (4.5,0.75);
    \draw (5.5,0.75) -- (bn2);
    \node at (5,0.75) {\Large\(\cdots\)};
\end{tikzpicture}
}
\caption{The poset \(I_n\).}
\label{fig:kelly-In}
\end{figure}
\FloatBarrier

The following two tables define \(W_{I_n}\).
\par\nopagebreak[4]
\noindent
{\small
\renewcommand{\arraystretch}{1.08}
\begin{minipage}[t]{0.49\textwidth}
\vspace{0pt}
\centering
\begin{tabular}{@{}clc@{}}
\toprule
Vertex & Critical pair & Range\\
\midrule
\(\alpha_i\) & \(\bigl((0,a_i),(q,c)\bigr)\) & \(1\leq i\leq n+2\)\\
\(\beta_i\) & \(\bigl((0,c),(q,b_i)\bigr)\) & \(1\leq i\leq n+3\)\\
\(\gamma_i\) & \(\bigl((q,a_i),(p,d_2)\bigr)\) & \(1\leq i\leq n+1\)\\
\(\gamma_{n+2}\) & \(\bigl((q,a_{n+2}),(p,d_1)\bigr)\) & \\
\(\pi_1\) & \(\bigl((q,c),(p,d_1)\bigr)\) & \\
\(\pi_2\) & \(\bigl((q,c),(p,d_2)\bigr)\) & \\
\(\pi_3\) & \(\bigl((p,c),(q,d_2)\bigr)\) & \\
\(r_1\) & \(\bigl((0,b_1),(q,d_2)\bigr)\) & \\
\(r_2\) & \(\bigl((0,b_{n+3}),(q,d_1)\bigr)\) & \\
\bottomrule
\end{tabular}
\end{minipage}\hfill
\begin{minipage}[t]{0.49\textwidth}
\vspace{0pt}
\centering
\begin{tabular}{@{}lll@{}}
\toprule
Vertices & Adjacent vertices & Range\\
\midrule
\(\alpha_i\) & \(\beta_i,\beta_{i+1},\pi_1,\pi_2\) &
  \(1\leq i\leq n+2\)\\
\(\beta_1\) & \(\gamma_1,r_1\) & \\
\(\beta_i\) & \(\gamma_{i-1},\gamma_i\) & \(2\leq i\leq n+1\)\\
\(\beta_{n+2}\) & \(\gamma_{n+1},\gamma_{n+2}\) & \\
\(\beta_{n+3}\) & \(\gamma_{n+2},r_2\) & \\
\(\gamma_i\) & \(\pi_3,r_2\) & \(1\leq i\leq n+1\)\\
\(\gamma_{n+2},\pi_1\) & \(\pi_3,r_1\) & \\
\(\pi_2\) & \(\pi_3,r_2\) & \\
\(r_1\) & \(r_2\) & \\
\bottomrule
\end{tabular}
\end{minipage}
}

\begin{proposition}\label{prop:I-coloring}
For every \(n\geq0\),
\(W_{I_n}\subseteq\Gamma(\Vee\times I_n)\), and \(W_{I_n}\) is not
\(3\)-colorable.
\end{proposition}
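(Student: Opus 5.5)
The proof has two parts, following the pattern of Propositions~\ref{prop:E-coloring}--\ref{prop:H-coloring}. First I show that \(W_{I_n}\subseteq\Gamma(\Vee\times I_n)\). Then I show by a forcing argument that \(W_{I_n}\) has no proper \(3\)-coloring.

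\emph{Critical pairs and edges.} From the Hasse diagram,
\[
\Min(I_n)=\{a_1,\dots,a_{n+2},c\},\qquad \Max(I_n)=\{d_1,d_2\}.
\]
I verify the vertices by type.
\begin{enumerate}
\item For \(\alpha_i\) and \(\beta_i\), I check \eqref{eq:product-critical-first} from \(\strictup c=\{d_1,d_2\}\) and \(\strictup b_i\subseteq\{d_1,d_2\}\). The cover relations are \(b_1<d_1\) only, \(b_{n+3}<d_2\) only, and \(b_i<d_1,d_2\) for the middle indices; all of these lie in \(\uparrow c\). Also \(\strictdown c=\varnothing\), and \(\strictup c=\{d_1,d_2\}\subseteq\uparrow a_i\) because each \(a_i\) lies below some \(b_j\) that is below both \(d\)'s.
\item For \(r_1=((0,b_1),(q,d_2))\) and \(r_2=((0,b_{n+3}),(q,d_1))\), I use \(b_1\nleq d_2\), \(b_{n+3}\nleq d_1\), \(\strictdown b_1=\{a_1\}\subseteq\downarrow d_2\), \(\strictdown b_{n+3}=\{a_{n+2}\}\subseteq\downarrow d_1\), and the maximality of the \(d\)'s.
\item The pairs \(\gamma_i,\pi_1,\pi_2,\pi_3\) are critical by Lemma~\ref{lem:product-critical}\textup{(ii)}. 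This needs \(a_i\leq d_2\) for \(i\leq n+1\) (via \(b_{i+1}\)), \(a_{n+2}\leq d_1\) (via \(b_{n+2}\)), and \(c\leq d_1,d_2\).
\end{enumerate}
For the edges, \eqref{eq:product-edge} reduces to the comparisons \(a_i\leq b_i,b_{i+1}\), \(c\leq d_j\), \(b_i\leq d_j\) (where it holds), and \(a_i\leq d_j\). I read each of these off the Hasse diagram, exactly as for \(E_n\).

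\emph{Non-colorability.} The key structure is an odd cycle, as for \(G_n\) and \(J_n\). The sequence
\[
r_1,\beta_1,\gamma_1,\beta_2,\gamma_2,\dots,\gamma_{n+1},\beta_{n+2},\gamma_{n+2},\beta_{n+3},r_2,r_1
\]
is a cycle in \(W_{I_n}\) of length \(2n+7\). Each vertex on it is adjacent to one of the "hub" vertices:
\begin{itemize}
\item every \(\gamma_i\) is adjacent to \(\pi_3\);
\item \(r_1\) is adjacent to \(\pi_1\);
\item \(r_2\) is adjacent to \(\pi_2\);
\item every \(\beta_i\) is adjacent to some \(\alpha_j\), and each \(\alpha_j\) is adjacent to both \(\pi_1\) and \(\pi_2\).
\end{itemize}
Since \(\pi_3\sim\pi_1,\pi_2\), I normalize \(\varphi(\pi_3)=0\). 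Then every \(\gamma_i\) has a color in \(\{1,2\}\). Because the cycle is odd, some non-\(\gamma\) vertex of the cycle must break the \(1/2\) alternation. That vertex is either an \(r_j\) or a \(\beta_i\) of color \(0\), or a vertex whose two cycle-neighbours force an inconsistency.

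I then split into two cases.
\begin{enumerate}
\item \(\varphi(\pi_1)=\varphi(\pi_2)=c\neq0\). Then every \(\alpha_i\) avoids \(c\), and \(r_1,r_2\) avoid \(c\). I propagate along the cycle and use the triangles \(\alpha_i\beta_i\beta_{i+1}\)-type constraints, namely \(\alpha_i\sim\beta_i,\beta_{i+1},\pi_1,\pi_2\). Any \(\beta_i\) or \(\beta_{i+1}\) colored \(c\) forces the remaining colors.
\item \(\varphi(\pi_1)\neq\varphi(\pi_2)\), say \(1\) and \(2\). Then every \(\alpha_i\) has color \(0\), so every \(\beta_i\) has a color in \(\{1,2\}\). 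Now the whole \(\beta\)/\(\gamma\) path alternates in \(\{1,2\}\). Its endpoints then satisfy \(\varphi(\beta_1)=\varphi(\beta_{n+3})\), by the parity of the path length \(2n+4\). The vertex \(r_1\) avoids color \(1\) (it is adjacent to \(\pi_1\)) and avoids \(\varphi(\beta_1)\). The vertex \(r_2\) avoids color \(2\) and avoids \(\varphi(\beta_{n+3})\). Together with \(r_1\sim r_2\), \(r_1\sim\gamma_{n+2}\), and the adjacencies between \(r_2\) and the \(\gamma_i\) with \(i\leq n+1\), this should yield a contradiction by direct checking of the two values of \(\varphi(\beta_1)\).
\end{enumerate}

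The main obstacle is case 1. There the \(\beta_i\) may take the hub color, so the clean parity argument breaks. I would first try to show that no \(\beta_i\) can receive color \(c\): such a \(\beta_i\) would force its neighbouring \(\alpha\)'s and \(\gamma\)'s into a single remaining color, and the adjacencies \(\gamma_i\sim r_2\) and \(\gamma_{n+2}\sim r_1\) should then obstruct \(r_1\) and \(r_2\). Once no \(\beta_i\) is colored \(c\), the cycle lies in two colors, which contradicts its odd length. If this fails for small \(n\), I would handle \(n=0\) separately by explicit forcing, as was done for \(H_0\).
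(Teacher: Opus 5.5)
Your verification of the critical pairs and edges is fine. Your Case~2 (\(\varphi(\pi_1)\neq\varphi(\pi_2)\)) is correct and is the paper's Claim. All \(\alpha_i\) get color \(0\), so the \(\beta\)--\(\gamma\) path alternates. Then \(r_1\) (via \(\beta_1,\gamma_{n+2}\)) and \(r_2\) (via \(\beta_{n+3},\gamma_1\)) both see colors \(1,2\), so both are \(0\), contradicting \(r_1\sim r_2\).

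The gap is Case~1, \(\varphi(\pi_1)=\varphi(\pi_2)=c\). This is the substantive half of the proof, and you only sketch it. Your closing step there is false. If no \(\beta_i\) has color \(c\), the cycle does \emph{not} lie in two colors. In \(W_{I_n}\) the \(\gamma_i\) are adjacent to \(\pi_3\) but to neither \(\pi_1\) nor \(\pi_2\), so they range over \(\{c,c'\}\), while the \(\beta_i\) and \(r_j\) range over \(\{0,c'\}\). Concretely, take \(\varphi(\gamma_i)=c\) for all \(i\), \(\varphi(r_1)=\varphi(\beta_{n+3})=0\), \(\varphi(r_2)=\varphi(\beta_1)=c'\), and color the other \(\beta_i\) arbitrarily from \(\{0,c'\}\). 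This properly colors all of \(W_{I_n}\) except the \(\alpha_i\), with no \(\beta_i\) colored \(c\).

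So the hub-and-odd-cycle argument from \(G_n,J_n\) does not transfer. The \(\beta_i\) are adjacent to no \(\pi_j\), and the \(\alpha_i\) through which they reach the hubs are not color-determined when \(\pi_1,\pi_2\) agree. The obstruction must come from the \(\alpha_i\). Since \(\alpha_i\in\{0,c'\}\) and \(\alpha_i\sim\beta_i,\beta_{i+1},\pi_1\), two consecutive \(\beta\)'s that both avoid \(c\) must be equal. With this your last step can be repaired: all \(\beta_i\) equal some \(b\in\{0,c'\}\), and then \(r_1,r_2\in\{0,c'\}\setminus\{b\}\) coincide, contradicting \(r_1\sim r_2\).

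The preliminary step, that no \(\beta_i\) has color \(c\), is not established either, and the mechanism you describe is inaccurate. A \(\beta_i\) colored \(c\) forces only its \(\gamma\)-neighbours (to \(c'\)). Its \(\alpha\)-neighbours were already confined to \(\{0,c'\}\) and gain nothing. The resulting restriction on \(r_1,r_2\) is an immediate contradiction only for \(i=n+2\), where \(\gamma_{n+1},\gamma_{n+2}\) both become \(c'\) and force \(r_1=r_2=0\).

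For example, \(\varphi(\beta_1)=c\) gives \(\varphi(\gamma_1)=c'\), hence \(\varphi(r_2)=0\), \(\varphi(r_1)=c'\), \(\varphi(\gamma_{n+2})=c\), and \(\varphi(\beta_{n+3})=c'\). A conflict appears only after propagating back along the whole chain, obtaining \(\varphi(\alpha_i)=0\) and \(\varphi(\beta_i)=c'\) down to \(i=1\). This induction along a chain whose length grows with \(n\) is the missing idea.

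It is exactly what the paper does. It notes \(\{\varphi(r_1),\varphi(r_2)\}=\{0,c'\}\) and splits on \(\varphi(r_2)\). It then runs a forward induction from \(\beta_1\), ending in a conflict at \(\alpha_{n+2}\), or a backward induction from \(\beta_{n+2}\), ending in a conflict on the edge \(r_1\sim\beta_1\). Your fallback of treating \(n=0\) separately does not help, since the difficulty is uniform in \(n\).
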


\begin{proof}
\enlargethispage{\baselineskip}
\smallskip
\noindent\emph{Critical pairs.}
The minimal and maximal elements of \(I_n\) are
\[
 \Min(I_n)=\{a_i:1\leq i\leq n+2\}\cup\{c\},
 \qquad
 \Max(I_n)=\{d_1,d_2\}.
\]
For \(\alpha_i\), condition~\eqref{eq:product-critical-first} follows
from
\[
 a_i\nleq c,\qquad \strictdown a_i=\varnothing,
 \qquad \strictup c=\{d_1,d_2\}\subseteq\uparrow a_i.
\]
For \(\beta_i\), the same condition follows from
\[
 c\nleq b_i,\qquad \strictdown c=\varnothing,
 \qquad \strictup b_i\subseteq\{d_1,d_2\}\subseteq\uparrow c.
\]
For \(r_1,r_2\), condition~\eqref{eq:product-critical-first} follows
from
\[
\begin{array}{llll}
 r_1:& b_1\nleq d_2,&
 \strictdown b_1=\{a_1\}\subseteq\downarrow d_2,&
 \strictup d_2=\varnothing,\\
 r_2:& b_{n+3}\nleq d_1,&
 \strictdown b_{n+3}=\{a_{n+2}\}\subseteq\downarrow d_1,&
 \strictup d_1=\varnothing.
\end{array}
\]
By Lemma~\ref{lem:product-critical}\textup{(i)},
\(\alpha_i,\beta_i,r_1,r_2\) are critical pairs.
Since \(a_i\leq d_2\) for \(1\leq i\leq n+1\),
\(a_{n+2}\leq d_1\), \(c\leq d_1\), and \(c\leq d_2\),
Lemma~\ref{lem:product-critical}\textup{(ii)} shows that
\(\gamma_1,\ldots,\gamma_{n+2},\pi_1,\pi_2,\pi_3\) are critical pairs.

\smallskip
\noindent\emph{Edges.}
By the Hasse diagram of \(I_n\), the two \(I_n\)-comparisons in
\eqref{eq:product-edge} hold for every adjacency in the right table.
Hence
\[
 W_{I_n}\subseteq\Gamma(\Vee\times I_n).
\]

Suppose that \(\varphi\) is a proper \(3\)-coloring of \(W_{I_n}\)
with colors \(0,1,2\).  Since \(\pi_1\sim\pi_3\), we may assume
\[
 \varphi(\pi_3)=0,
 \qquad
 \varphi(\pi_1)=1.
\]
Each of \(\gamma_1,\ldots,\gamma_{n+2},\pi_2\) is adjacent to
\(\pi_3\), so these vertices have color \(1\) or \(2\).

\medskip
\noindent\textbf{Claim.}
\(\varphi(\pi_2)=1\).

\smallskip
Assume that \(\varphi(\pi_2)=2\).  For \(1\leq i\leq n+2\),
\[
 \alpha_i\sim\pi_1,\pi_2
 \quad\Longrightarrow\quad
 \varphi(\alpha_i)=0.
\]
The adjacencies \(\beta_1\sim\alpha_1\) and
\(\beta_i\sim\alpha_{i-1}\) for \(2\leq i\leq n+3\) exclude color
\(0\) from every \(\beta_i\).

The edges
\[
 \beta_i\sim\gamma_i
 \quad\text{and}\quad
 \gamma_i\sim\beta_{i+1}
 \qquad(1\leq i\leq n+2)
\]
form a path from \(\beta_1\) to \(\beta_{n+3}\).  Every vertex on the
path has color \(1\) or \(2\), so the colors alternate.  The subpaths from
\(\gamma_1\) to \(\beta_{n+3}\) and from \(\beta_1\) to
\(\gamma_{n+2}\) each have \(2n+3\) edges.  Thus
\[
  \{\varphi(\beta_{n+3}),\varphi(\gamma_1)\}=\{1,2\},
  \qquad
  \{\varphi(\beta_1),\varphi(\gamma_{n+2})\}=\{1,2\}.
\]
The two neighbors \(\beta_{n+3},\gamma_1\) of \(r_2\) use colors
\(1,2\), so \(\varphi(r_2)=0\).  The neighbors
\(r_2,\beta_1,\gamma_{n+2}\) of \(r_1\) use all three colors, a
contradiction.  This proves the claim.

Since \(r_2\sim\pi_2\), we have
\(\varphi(r_2)\in\{0,2\}\).

Suppose first that \(\varphi(r_2)=2\).  We have
\[
\begin{aligned}
 r_1\sim r_2,\pi_1
   &\Longrightarrow \varphi(r_1)=0,\\
 \gamma_i\sim r_2,\pi_3
   &\Longrightarrow \varphi(\gamma_i)=1
     &&(1\leq i\leq n+1),\\
 \beta_1\sim r_1,\gamma_1
   &\Longrightarrow \varphi(\beta_1)=2.
\end{aligned}
\]
For \(1\leq i\leq n+1\), if \(\varphi(\beta_i)=2\), then
\[
\begin{aligned}
 \alpha_i\sim\beta_i,\pi_1
   &\Longrightarrow \varphi(\alpha_i)=0,\\
 \beta_{i+1}\sim\alpha_i,\gamma_i
   &\Longrightarrow \varphi(\beta_{i+1})=2.
\end{aligned}
\]
Starting with \(\beta_1\), forward induction gives
\[
 \varphi(\beta_1)=\cdots=\varphi(\beta_{n+2})=2.
\]
At the right endpoint,
\[
\begin{aligned}
 \gamma_{n+2}\sim\beta_{n+2},\pi_3
   &\Longrightarrow \varphi(\gamma_{n+2})=1,\\
 \beta_{n+3}\sim\gamma_{n+2},r_2
   &\Longrightarrow \varphi(\beta_{n+3})=0.
\end{aligned}
\]
The neighbors \(\beta_{n+3},\pi_1,\beta_{n+2}\) of
\(\alpha_{n+2}\) use colors \(0,1,2\), a contradiction.

Suppose that \(\varphi(r_2)=0\).  We have
\[
\begin{aligned}
 r_1\sim r_2,\pi_1
   &\Longrightarrow \varphi(r_1)=2,\\
 \gamma_{n+2}\sim r_1,\pi_3
   &\Longrightarrow \varphi(\gamma_{n+2})=1,\\
 \beta_{n+3}\sim\gamma_{n+2},r_2
   &\Longrightarrow \varphi(\beta_{n+3})=2.
\end{aligned}
\]
The edge \(\beta_{n+2}\sim\gamma_{n+2}\) excludes color \(1\) from
\(\beta_{n+2}\).  If \(\varphi(\beta_{n+2})=0\), the neighbors
\(\beta_{n+2},\pi_1,\beta_{n+3}\) of \(\alpha_{n+2}\) have colors
\(0,1,2\), a contradiction.  Thus
\(\varphi(\beta_{n+2})=2\).

For \(1\leq i\leq n+1\), if
\(\varphi(\beta_{i+1})=2\), then
\[
\begin{aligned}
 \gamma_i\sim\beta_{i+1},\pi_3
   &\Longrightarrow \varphi(\gamma_i)=1,\\
 \alpha_i\sim\beta_{i+1},\pi_1
   &\Longrightarrow \varphi(\alpha_i)=0,\\
 \beta_i\sim\gamma_i,\alpha_i
   &\Longrightarrow \varphi(\beta_i)=2.
\end{aligned}
\]
Starting with \(\beta_{n+2}\) and taking
\(i=n+1,n,\ldots,1\), reverse induction gives
\[
 \varphi(\beta_{n+2})=\cdots=\varphi(\beta_1)=2.
\]
This contradicts the edge \(r_1\sim\beta_1\), since
\(\varphi(r_1)=2\).
\end{proof}

\begin{proof}[Proof of Theorem~\ref{thm:infinite}]
Propositions~\ref{prop:E-coloring}, \ref{prop:F-coloring},
\ref{prop:GJ-coloring}, \ref{prop:H-coloring}, and
\ref{prop:I-coloring} provide a non-\(3\)-colorable subgraph
\(W_R\subseteq\Gamma(\Vee\times R)\) for every member \(R\) of the six
infinite families and every \(n\geq0\).  Therefore
\(\Gamma(\Vee\times R)\) is not \(3\)-colorable.
\end{proof}

\section{Computer verification for the ten fixed posets}
\label{sec:finite-cases}

The ten fixed noncrown posets in the classification are
\[
  B,C,D,CX_1,CX_2,CX_3,EX_1,EX_2,FX_1,FX_2.
\]
Figure~\ref{fig:kelly-fixed} shows the ten Hasse diagrams.
Table~\ref{tab:fixed-relations} lists the generating relations supplied
to the program.
\begin{figure}[p]
\centering
\resizebox{0.96\textwidth}{!}{%
\begin{tikzpicture}[scale=1.0]
    \begin{scope}[xshift=0cm]
        \node[pn, label=below:$b_1$] (Bb1) at (0.5,0) {};
        \node[pn, label=below:$a$] (Ba) at (2,0) {};
        \node[pn, label=below:$b_2$] (Bb2) at (3,0) {};
        \node[pn, label=below:$b_3$] (Bb3) at (4.5,0) {};
        \node[pn, label=above:$c_1$] (Bc1) at (1,2) {};
        \node[pn, label=above:$c_2$] (Bc2) at (2,2) {};
        \node[pn, label=above:$c_3$] (Bc3) at (3,2) {};
        \draw (Bb1)--(Bc1) (Ba)--(Bc1) (Ba)--(Bc2)
              (Ba)--(Bc3) (Bb2)--(Bc2) (Bb3)--(Bc3);
        \node[title] at (2.5,-0.9) {$B$};
    \end{scope}

    \begin{scope}[xshift=6cm]
        \node[pn, label=below:$a$] (Ca) at (2,0) {};
        \node[pn, label=left:$b_1$] (Cb1) at (1,1) {};
        \node[pn, label=right:$b_2$] (Cb2) at (3,1) {};
        \node[pn, label=right:$b_3$] (Cb3) at (4.5,1) {};
        \node[pn, label=above:$c_1$] (Cc1) at (0,2) {};
        \node[pn, label=above:$c_2$] (Cc2) at (2,2) {};
        \node[pn, label=above:$c_3$] (Cc3) at (4,2) {};
        \draw (Ca)--(Cb1) (Ca)--(Cb2) (Cb1)--(Cc1)
              (Cb1)--(Cc2) (Cb2)--(Cc2) (Cb2)--(Cc3)
              (Cb3)--(Cc2);
        \node[title] at (2,-0.9) {$C$};
    \end{scope}

    \begin{scope}[xshift=12cm]
        \node[pn, label=below:$a$] (Da) at (2,0) {};
        \node[pn, label=left:$b_1$] (Db1) at (0,1.5) {};
        \node[pn, label=above right:$b_2$] (Db2) at (2,1.8) {};
        \node[pn, label=right:$b_3$] (Db3) at (4,1.5) {};
        \node[pn, label=above:$c_1$] (Dc1) at (0,3) {};
        \node[pn, label=above:$c_2$] (Dc2) at (4,3) {};
        \draw (Da)--(Db1) (Da)--(Db3) (Db1)--(Dc1)
              (Db2)--(Dc1) (Db2)--(Dc2) (Db3)--(Dc2);
        \node[title] at (2,-0.9) {$D$};
    \end{scope}
\end{tikzpicture}
}
\par\medskip
\resizebox{0.94\textwidth}{!}{%
\begin{tikzpicture}[scale=1.0]
    \begin{scope}[xshift=0cm]
        \node[pn, label=below:$a_1$] (XOnea1) at (0,0) {};
        \node[pn, label=below:$a_2$] (XOnea2) at (3,0) {};
        \node[pn, label=below:$a_3$] (XOnea3) at (5,0) {};
        \node[pn, label=above:$b_1$] (XOneb1) at (0,1.5) {};
        \node[pn, label=right:$b_2$] (XOneb2) at (3,1.5) {};
        \node[pn, label=above:$b_3$] (XOneb3) at (4,1.5) {};
        \node[pn, label=right:$c$] (XOnec) at (3,3) {};
        \draw (XOnea1)--(XOneb1) (XOnea1)--(XOnec)
              (XOnea2)--(XOneb1) (XOnea2)--(XOneb2)
              (XOnea2)--(XOneb3) (XOnea3)--(XOneb2)
              (XOneb2)--(XOnec);
        \node[title] at (2.5,-0.9) {$CX_1$};
    \end{scope}

    \begin{scope}[xshift=6.5cm]
        \node[pn, label=below:$a_1$] (XTwoa1) at (0,0) {};
        \node[pn, label=below:$a_2$] (XTwoa2) at (2,0) {};
        \node[pn, label=below:$a_3$] (XTwoa3) at (4,0) {};
        \node[pn, label=above:$b_1$] (XTwob1) at (0,1.5) {};
        \node[pn, label=above:$b_2$] (XTwob2) at (1.5,1.5) {};
        \node[pn, label=above:$b_3$] (XTwob3) at (4,1.5) {};
        \node[pn, label=above:$c$] (XTwoc) at (2.5,1.5) {};
        \draw (XTwoa1)--(XTwob1) (XTwoa1)--(XTwob2)
              (XTwoa2)--(XTwob1) (XTwoa2)--(XTwob2)
              (XTwoa2)--(XTwob3) (XTwoa2)--(XTwoc)
              (XTwoa3)--(XTwob2) (XTwoa3)--(XTwob3);
        \node[title] at (2,-0.9) {$CX_2$};
    \end{scope}

    \begin{scope}[xshift=12cm]
        \node[pn, label=below:$a_1$] (XThreea1) at (0,0) {};
        \node[pn, label=below:$a_2$] (XThreea2) at (2,0) {};
        \node[pn, label=below:$a_3$] (XThreea3) at (4,0) {};
        \node[pn, label=above:$b_1$] (XThreeb1) at (0,1) {};
        \node[pn, label=above:$b_2$] (XThreeb2) at (2,1) {};
        \node[pn, label=above:$b_3$] (XThreeb3) at (3,1) {};
        \node[pn, label=right:$c$] (XThreec) at (4,2) {};
        \draw (XThreea1)--(XThreeb1) (XThreea1)--(XThreeb2)
              (XThreea2)--(XThreeb1) (XThreea2)--(XThreeb2)
              (XThreea2)--(XThreeb3) (XThreea3)--(XThreeb2)
              (XThreea3)--(XThreec) (XThreeb3)--(XThreec);
        \node[title] at (2,-0.9) {$CX_3$};
    \end{scope}
\end{tikzpicture}
}
\par\medskip
\resizebox{0.86\textwidth}{!}{%
\begin{tikzpicture}[scale=1.1]
    \begin{scope}[yshift=0cm, xshift=0cm]
        \node[pn, label=below:$a_1$] (a1) at (1,0) {}; 
        \node[pn, label=below:$a_2$] (a2) at (3,0) {}; 
        \node[pn, label=below:$a_3$] (a3) at (5,0) {};
        \node[pn, label=above:$b_1$] (b1) at (0,1.5) {}; 
        \node[pn, label=above:$b_2$] (b2) at (2,1.5) {}; 
        \node[pn, label=above:$b_3$] (b3) at (4,1.5) {}; 
        \node[pn, label=above:$b_4$] (b4) at (6,1.5) {};
        
        \draw (a1)--(b1) (a1)--(b2) (a1)--(b3) (a2)--(b2) (a2)--(b3) (a2)--(b4) (a3)--(b3);
        \node[title] at (3,-1) {$EX_1$};
    \end{scope}

    \begin{scope}[yshift=0cm, xshift=8cm]
        \node[pn, label=below:$a_1$] (a1) at (0,0) {}; 
        \node[pn, label=below:$a_2$] (a2) at (2,0) {}; 
        \node[pn, label=below:$a_3$] (a3) at (4,0) {};
        \node[pn, label=left:$b_1$] (b1) at (0,1.5) {}; 
        \node[pn, label=above:$b_2$] (b2) at (2,1.5) {}; 
        \node[pn, label=above:$b_3$] (b3) at (4,1.5) {};
        \node[pn, label=above:$c$] (c) at (0,3) {};
        
        \draw (a1)--(b1) (a1)--(b2) (a2)--(c) (a2)--(b2) (a2)--(b3) (a3)--(b2) (b1)--(c);
        \node[title] at (2,-1) {$EX_2$};
    \end{scope}
\end{tikzpicture}
}
\par\medskip
\resizebox{0.82\textwidth}{!}{%
\begin{tikzpicture}[scale=1.1]
    \begin{scope}[yshift=0cm, xshift=0cm]
        \node[pn, label=below:$a_1$] (a1) at (0,0) {}; 
        \node[pn, label=below:$a_2$] (a2) at (2,0) {}; 
        \node[pn, label=below:$a_3$] (a3) at (4,0) {};
        \node[pn, label=above:$b_1$] (b1) at (0,1.5) {}; 
        \node[pn, label=left:$b_2$] (b2) at (2,1.5) {}; 
        \node[pn, label=above:$b_3$] (b3) at (4,1.5) {};
        \node[pn, label=above:$c$] (c) at (2,3) {};
        
        \draw (a1)--(b1) (a1)--(b2) (a2)--(b1) (a2)--(b2) (a2)--(b3) (a3)--(b2) (b2)--(c) (b3)--(c);
        \node[title] at (2,-1) {$FX_1$};
    \end{scope}

    \begin{scope}[yshift=0cm, xshift=7cm]
        \node[pn, label=below:$a_1$] (a1) at (0,0) {}; 
        \node[pn, label=below:$a_2$] (a2) at (2,0) {}; 
        \node[pn, label=below:$a_3$] (a3) at (4,0) {};
        \node[pn, label=above:$b_1$] (b1) at (0,1.5) {}; 
        \node[pn, label=left:$b_2$] (b2) at (2,1.5) {}; 
        \node[pn, label=above:$b_3$] (b3) at (4,1.5) {};
        \node[pn, label=above:$c$] (c) at (2,3) {};
        
        \draw (a1)--(b1) (a1)--(b3) (a1)--(c) (a2)--(b1) (a2)--(b2) (a2)--(b3) (a3)--(c) (a3)--(b3) (b2)--(c);
        \node[title] at (2,-1) {$FX_2$};
    \end{scope}
\end{tikzpicture}
}
\caption{The ten fixed posets in Kelly's classification.}
\label{fig:kelly-fixed}
\end{figure}

\begin{longtable}{@{}>{\(}l<{\)}p{0.81\linewidth}@{}}
\caption{Generating relations for the ten fixed
posets.}\label{tab:fixed-relations}\\
\toprule
\text{Poset}&\text{Generating relations}\\
\midrule
\endfirsthead
\toprule
\text{Poset}&\text{Generating relations}\\
\midrule
\endhead
B&\(b_1<c_1;\ a<c_1,c_2,c_3;\ b_2<c_2;\ b_3<c_3.\)\\
C&\(a<b_1,b_2;\ b_1<c_1,c_2;\ b_2<c_2,c_3;\ b_3<c_2.\)\\
D&\(a<b_1,b_3;\ b_1<c_1;\ b_2<c_1,c_2;\ b_3<c_2.\)\\
CX_1&\(a_1<b_1,c;\ a_2<b_1,b_2,b_3;\ a_3<b_2;\ b_2<c.\)\\
CX_2&\(a_1<b_1,b_2;\ a_2<b_1,b_2,b_3,c;\ a_3<b_2,b_3.\)\\
CX_3&\(a_1<b_1,b_2;\ a_2<b_1,b_2,b_3;\ a_3<b_2,c;\ b_3<c.\)\\
EX_1&\(a_1<b_1,b_2,b_3;\ a_2<b_2,b_3,b_4;\ a_3<b_3.\)\\
EX_2&\(a_1<b_1,b_2;\ a_2<c,b_2,b_3;\ a_3<b_2;\ b_1<c.\)\\
FX_1&\(a_1<b_1,b_2;\ a_2<b_1,b_2,b_3;\ a_3<b_2;\ b_2<c;\ b_3<c.\)\\
FX_2&\(a_1<b_1,b_3,c;\ a_2<b_1,b_2,b_3;\ a_3<c,b_3;\ b_2<c.\)\\
\bottomrule
\end{longtable}

The graph sizes and the outcomes of the exhaustive \(3\)-coloring
search are listed in Table~\ref{tab:fixed-graph-sizes}.

\begin{table}[!htbp]
\centering
\caption{Sizes of the graphs of critical pairs for the ten fixed posets.}
\label{tab:fixed-graph-sizes}
\small
\begin{tabular}{@{}lrrl@{}}
\toprule
Product&Vertices&Edges&Result\\
\midrule
\(\Vee\times B\)&30&72&not \(3\)-colorable\\
\(\Vee\times C\)&22&58&not \(3\)-colorable\\
\(\Vee\times D\)&18&56&not \(3\)-colorable\\
\(\Vee\times CX_1\)&30&92&not \(3\)-colorable\\
\(\Vee\times CX_2\)&38&128&not \(3\)-colorable\\
\(\Vee\times CX_3\)&30&101&not \(3\)-colorable\\
\(\Vee\times EX_1\)&38&111&not \(3\)-colorable\\
\(\Vee\times EX_2\)&30&88&not \(3\)-colorable\\
\(\Vee\times FX_1\)&28&89&not \(3\)-colorable\\
\(\Vee\times FX_2\)&38&156&not \(3\)-colorable\\
\bottomrule
\end{tabular}
\end{table}
\FloatBarrier

\begin{proposition}\label{prop:sporadic-computation}
For every
\[
 R\in\{B,C,D,CX_1,CX_2,CX_3,EX_1,EX_2,FX_1,FX_2\},
\]
the graph \(\Gamma(\Vee\times R)\) is not \(3\)-colorable.
\end{proposition}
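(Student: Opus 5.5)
The proof will be a finite, mechanical verification carried out for each of the ten posets $R$ in Table~\ref{tab:fixed-relations}. For each $R$, we first compute the order relation of $R$ as the reflexive–transitive closure of the listed generating relations. We then form $\Vee\times R$ with the coordinatewise order, so it has $3|R|$ elements, namely $21$ for the seven-element posets and $18$ for $D$. Next we enumerate $\Inc(\Vee\times R)$. We retain exactly those pairs satisfying \eqref{eq:critical-pair}, and we cross-check this list against the two cases of Lemma~\ref{lem:product-critical}. Every critical pair $((u,x),(v,y))$ either has $u\le_{\Vee}v$, which falls under case (i) or a pair $(0,x),(0,y)$ handled directly, or has $u,v$ distinct in $\{p,q\}$, which is case (ii). Finally, we build $\Gamma(\Vee\times R)$ using the edge rule \eqref{eq:critical-edge}, in its coordinatewise form \eqref{eq:product-edge}. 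The vertex and edge counts produced this way should match Table~\ref{tab:fixed-graph-sizes}, which serves as a consistency check on the closure and criticality computations.

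To show that no proper $3$-coloring exists, we run a backtracking search with the usual pruning. Vertices are ordered by decreasing degree, or by the DSATUR heuristic. We break the $S_3$ color symmetry by fixing the colors on a triangle if one exists, or otherwise by fixing the first two adjacent vertices. Partial assignments are propagated by forcing: any vertex whose neighbors already use two colors receives the third. The search terminates because the graphs have at most $38$ vertices. An empty search tree certifies that the graph is not $3$-colorable, and by Corollary~\ref{cor:nonthreecolorable-lower-bound} this gives $\dim(\Vee\times R)\ge4$. As an independent check, we would encode the $3$-colorability of each graph as a small CNF formula, with one variable per vertex–color pair, and confirm unsatisfiability with a SAT solver. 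Where possible, we would also extract a small non-$3$-colorable subgraph, in the style of the $W_R$ of Section~\ref{sec:infinite}, so that a human could verify it by a short forcing argument.

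The main obstacle is not the search itself, which is tiny. It is making sure that the input is correct: that the generating relations really encode Kelly's posets and not their duals, and that no critical pair is dropped or wrongly added. Adding spurious edges would make the graph artificially harder to color and could produce a false negative. To guard against this, we verify for each $R$ that the computed closure gives $\dim R=3$ and that $\dim(R-x)\le2$ for every $x$, i.e., that $R$ is $3$-irreducible, for instance by checking that each $R-x$ has a $2$-colorable graph of critical pairs. We then rely only on the definitions \eqref{eq:critical-pair} and \eqref{eq:critical-edge}, not on any shortcut. The paper's remark that dual posets are covered is handled separately by Lemma~\ref{lem:graph-duality} and Proposition~\ref{prop:crown-product}, since $\Gamma(\Vee^d\times R^d)\cong\Gamma(\Vee\times R)$.
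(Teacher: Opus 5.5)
Your plan matches the paper's proof: compute $\leq_R$ from Table~\ref{tab:fixed-relations}, list $\Crit(\Vee\times R)$ directly from \eqref{eq:critical-pair}, build $\Gamma(\Vee\times R)$ from \eqref{eq:critical-edge}, and run an exhaustive backtracking $3$-coloring search with a DSATUR-type vertex order, which ends without a coloring in all ten cases. Your extras (symmetry breaking, forcing, a SAT cross-check, re-checking $3$-irreducibility) are sound but not needed for the statement. One small correction to your cross-check against Lemma~\ref{lem:product-critical}: pairs $((0,x),(0,y))$, $((s,x),(s,y))$ and $((s,x),(0,y))$ with $s\in\{p,q\}$ are never critical, so every vertex is exactly of type \textup{(i)} or \textup{(ii)}.
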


\begin{proof}
Fix \(R\) and put \(P_R=\Vee\times R\).  The program performs the
following four steps.

\begin{enumerate}[label=\textup{(\arabic*)}]
\item It computes the reflexive transitive closure of the generating
relations in Table~\ref{tab:fixed-relations} and checks antisymmetry.
The resulting relation is the order relation \(\leq_R\) of \(R\).

\item It constructs \(P_R\) with the coordinatewise order
\[
  (s,x)\leq_{P_R}(t,y)
  \quad\Longleftrightarrow\quad
  s\leq_{\Vee}t\ \text{and}\ x\leq_R y.
\]
For each ordered pair \((u,v)\) of incomparable elements of \(P_R\), it
tests whether
\[
  \strictdown u\subseteq\strictdown v
  \qquad\text{and}\qquad
  \strictup v\subseteq\strictup u.
\]
By \eqref{eq:critical-pair}, the pairs satisfying these conditions are
exactly the elements of \(\Crit(P_R)\).

\item It constructs a graph \(G_R\) with vertex set \(\Crit(P_R)\).
Distinct vertices \((u_1,v_1)\) and \((u_2,v_2)\) are adjacent in \(G_R\)
exactly when
\[
  u_1\leq_{P_R}v_2
  \qquad\text{and}\qquad
  u_2\leq_{P_R}v_1.
\]
By \eqref{eq:critical-edge},
\[
  G_R=\Gamma(P_R).
\]

\item It tests \(G_R\) for \(3\)-colorability by exhaustive backtracking.
Whenever uncolored vertices remain, the program selects one of them and
tries every color in \(\{0,1,2\}\) that is not used by any of its colored
neighbors.  If one choice cannot be extended, the program tries the next
available color.  If no choice succeeds, it leaves the vertex uncolored
and returns to the preceding step.  The search succeeds when every vertex
has been colored.  The next vertex is selected by maximizing the number
of colors already used on its colored neighbors, with degree and the order
in the program's list used to break ties.  This rule affects only the order
in which the possibilities are examined.
\end{enumerate}

The search in \textup{(4)} is exhaustive.  Suppose that \(G_R\) has a
proper \(3\)-coloring \(\varphi\).  Starting at the root of the search
tree, follow the branch that assigns the color \(\varphi(v)\) whenever
the program selects a vertex \(v\).  Since \(\varphi\) is proper,
\(\varphi(v)\) is not used by any previously colored neighbor of \(v\).
Thus this color is among those tried by the program.  Continuing in this
way produces a complete coloring, so the search would succeed.

For each of the ten choices of \(R\), the search terminates without a
proper \(3\)-coloring.  The resulting numbers of vertices and edges are
listed in Table~\ref{tab:fixed-graph-sizes}.  Hence
\(\Gamma(\Vee\times R)\) is not \(3\)-colorable in every case.
\end{proof}

\section{Proofs of the main results}\label{sec:main-results}

\begin{theorem}\label{thm:irreducible-case}
Let \(R\) be a noncrown \(3\)-irreducible poset.  At least one of
\[
  \Gamma(\Vee\times R)
  \quad\text{and}\quad
  \Gamma(\Vee^d\times R)
\]
is not \(3\)-colorable.
\end{theorem}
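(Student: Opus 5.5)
By the classification of \(3\)-irreducible posets, every noncrown \(3\)-irreducible poset \(R\) is isomorphic either to a member of one of the families \(E_n,F_n,G_n,H_n,I_n,J_n\) or one of the ten fixed posets \(B,\dots,FX_2\), or to the dual of such a poset. The plan is to split into these two cases.

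\emph{Case 1: \(R\) is itself on the list.} If \(R\) belongs to one of the six infinite families, Theorem~\ref{thm:infinite} shows that \(\Gamma(\Vee\times R)\) is not \(3\)-colorable. If \(R\) is one of the ten fixed posets, Proposition~\ref{prop:sporadic-computation} gives the same conclusion. In both subcases the first alternative of the statement holds.

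\emph{Case 2: \(R\cong S^d\) with \(S\) on the list.} By Case 1, \(\Gamma(\Vee\times S)\) is not \(3\)-colorable. We have
\[
  (\Vee\times S)^d=\Vee^d\times S^d\cong\Vee^d\times R .
\]
Lemma~\ref{lem:graph-duality} gives a graph isomorphism \(\Gamma(\Vee\times S)\cong\Gamma\bigl((\Vee\times S)^d\bigr)\). Since the graph of critical pairs is an isomorphism invariant of the poset, it follows that
\[
  \Gamma(\Vee^d\times R)\cong\Gamma(\Vee\times S),
\]
which is not \(3\)-colorable. So the second alternative of the statement holds.

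The only point that needs care is the bookkeeping with duality and isomorphism. The classification is stated up to isomorphism and duality, so I must check that "noncrown" is preserved under both. This holds because crowns are self-dual, so the dual of a noncrown poset on the list is again noncrown. I also need that the product of duals is the dual of the product, which the preliminaries record. Beyond this, the proof is a direct assembly of results already established, and I do not expect a genuine obstacle.
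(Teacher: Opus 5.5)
Your proposal is correct and follows the paper's proof essentially verbatim: pick \(S\) in the six infinite families or among the ten fixed posets with \(R\cong S\) or \(R\cong S^d\), and apply Theorem~\ref{thm:infinite} or Proposition~\ref{prop:sporadic-computation} to \(S\). In the dual case you transfer non-\(3\)-colorability via \((\Vee\times S)^d\cong\Vee^d\times R\) and Lemma~\ref{lem:graph-duality}; your remark that crowns are self-dual correctly justifies that the matched \(S\) is noncrown, a point the paper leaves implicit.
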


\begin{proof}
By the classification, choose a poset \(S\) from the six infinite families
or the ten fixed posets such that \(R\cong S\) or \(R^d\cong S\).
Theorem~\ref{thm:infinite} covers the infinite families, and
Proposition~\ref{prop:sporadic-computation} covers the fixed posets.  Thus
\(\Gamma(\Vee\times S)\) is not \(3\)-colorable.

If \(R\cong S\), then \(\Gamma(\Vee\times R)\) is not
\(3\)-colorable.  If \(R^d\cong S\), then
\[
  (\Vee^d\times R)^d\cong\Vee\times S.
\]
Lemma~\ref{lem:graph-duality} shows that
\(\Gamma(\Vee^d\times R)\) is not \(3\)-colorable.
\end{proof}

\begin{corollary}\label{cor:crown-noncrown}
Let \(R\) be a \(3\)-irreducible poset that is not a crown.  For every
\(k\geq3\),
\[
  \dim(C_k\times R)=4.
\]
\end{corollary}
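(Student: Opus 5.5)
The plan is to combine Theorem~\ref{thm:irreducible-case} with Proposition~\ref{prop:crown-product}; the statement is essentially a direct consequence.

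First, since \(R\) is \(3\)-irreducible, we have \(\dim R=3\) by definition, so the hypothesis \(\dim R=3\) of Proposition~\ref{prop:crown-product} holds. Second, \(R\) is not a crown. In the classification, the list up to isomorphism and duality consists of the crowns \(A_n=C_{n+3}\), the six infinite families, and the ten fixed posets. We should also note that crowns are self-dual, so ``not a crown'' places \(R\) or \(R^d\) among the noncrown members. Thus \(R\) is a noncrown \(3\)-irreducible poset, and Theorem~\ref{thm:irreducible-case} applies. It gives that at least one of \(\Gamma(\Vee\times R)\) and \(\Gamma(\Vee^d\times R)\) is not \(3\)-colorable.

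Third, we invoke Proposition~\ref{prop:crown-product} with this \(R\). This yields \(\dim(C_k\times R)=4\) for every \(k\geq3\). For completeness, the ingredients of that proposition are as follows. Corollary~\ref{cor:nonthreecolorable-lower-bound} gives \(\dim(\Vee\times R)\geq4\) or \(\dim(\Vee^d\times R)\geq4\). Since every \(C_k\) contains both \(\Vee\) and \(\Vee^d\), monotonicity lifts this bound to \(C_k\times R\). The upper bound comes from \eqref{eq:fmw-crown-upper-intro}, which gives \(\dim(R\times C_k)\leq\max\{3,\dim R+1\}=4\), together with \(C_k\times R\cong R\times C_k\).

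There is no real obstacle here. The only point needing care is the bookkeeping step: matching ``\(3\)-irreducible and not a crown'' with the hypothesis ``noncrown \(3\)-irreducible'' of Theorem~\ref{thm:irreducible-case}. Concretely, this means confirming that the dual of a crown is a crown, so no dual of a crown hides among the other classes.
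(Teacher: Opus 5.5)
Your proposal is correct and matches the paper's proof, which likewise combines Theorem~\ref{thm:irreducible-case} with Proposition~\ref{prop:crown-product}. Your remark that crowns are self-dual is harmless bookkeeping that the paper leaves implicit.
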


\begin{proof}
The result follows from Theorem~\ref{thm:irreducible-case} and
Proposition~\ref{prop:crown-product}.
\end{proof}

\begin{theorem}\label{thm:main}
Let \(P\) and \(Q\) be finite posets.  If \(\dim P=\dim Q=3\), then
\[
  \dim(P\times Q)\geq4.
\]
\end{theorem}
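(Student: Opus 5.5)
The plan is to reduce to \(3\)-irreducible subposets and then split according to the three cases described in the introduction. Since \(\dim P=\dim Q=3\), the result of Kelly quoted in Section~\ref{sec:introduction} provides \(3\)-irreducible subposets \(P'\subseteq P\) and \(Q'\subseteq Q\). Then \(P'\times Q'\) is a subposet of \(P\times Q\), so by monotonicity it suffices to prove \(\dim(P'\times Q')\geq4\). By the classification of Kelly and of Trotter and Moore, each of \(P',Q'\) is either a crown \(C_k\) with \(k\geq3\) or a noncrown \(3\)-irreducible poset, which is up to duality one of the six infinite families or one of the ten fixed posets.

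\emph{Crown--crown.} If \(P'\cong C_k\) and \(Q'\cong C_\ell\) with \(k,\ell\geq3\), I would cite the result of Felsner et al.\ that \(\dim(C_k\times C_\ell)=4\). This gives the bound directly.

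\emph{Crown--noncrown.} By the symmetry \(P\times Q\cong Q\times P\), we may assume \(P'\cong C_k\) and \(Q'\) is a noncrown. Corollary~\ref{cor:crown-noncrown} then gives \(\dim(C_k\times Q')=4\). This is the new content of the paper and is already established.

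\emph{Noncrown--noncrown.} Here I would use the observation of Felsner et al.\ that every noncrown \(3\)-irreducible poset contains a copy of \(Z_3\), \(Z_3^d\), or \(C_2\). They proved \(\dim\geq4\) for the products \(Z_3\times Z_3\), \(Z_3\times Z_3^d\), \(Z_3\times C_2\), and \(C_2\times C_2\). I would reduce every combination to this list. One uses \(\dim X^d=\dim X\), the identity \((X\times Y)^d=X^d\times Y^d\), the commutativity of the product, and the self-duality of \(C_2\). For example, \(Z_3^d\times Z_3^d\) is the dual of \(Z_3\times Z_3\). Likewise \(Z_3^d\times C_2\) is the dual of \(Z_3\times C_2\), and \(Z_3^d\times Z_3\) is isomorphic to \(Z_3\times Z_3^d\). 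Monotonicity then gives \(\dim(P'\times Q')\geq4\).

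The main obstacle is only bookkeeping. I need to verify that all six possible pairings of \(\{Z_3,Z_3^d,C_2\}\) reduce to the four listed products under duality and swapping, and I have checked this above. I also need to confirm that the cited containment of \(Z_3\), \(Z_3^d\), or \(C_2\) holds for every noncrown member of the classification, including the dual posets; this follows because the dual of such a poset contains the dual copy. Combining the three cases proves the theorem.
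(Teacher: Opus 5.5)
Your proposal is correct and follows the paper's proof essentially verbatim: reduce to \(3\)-irreducible subposets, then settle crown--crown by the result of Felsner et al., crown--noncrown by Corollary~\ref{cor:crown-noncrown}, and noncrown--noncrown via the \(Z_3\), \(Z_3^d\), \(C_2\) reduction. Your explicit check that the six pairings reduce to the four listed products under duality and factor exchange is the bookkeeping the paper leaves implicit in the phrase ``up to factor exchange and duality.''
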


\begin{proof}
Choose \(3\)-irreducible subposets \(P_0\subseteq P\) and
\(Q_0\subseteq Q\).  By monotonicity, it suffices to prove
\(\dim(P_0\times Q_0)\geq4\).

If both \(P_0\) and \(Q_0\) are crowns, the result follows from
\cite[Proposition~19]{OrderGridsProducts}.  If exactly one is a crown,
Corollary~\ref{cor:crown-noncrown} applies.  Suppose neither is a crown.
Each contains a subposet isomorphic to one of \(Z_3,Z_3^d,C_2\).
Choose such subposets \(X\subseteq P_0\) and \(Y\subseteq Q_0\).
Up to factor exchange and duality, \(X\times Y\) is one of
\[
  Z_3\times Z_3,
  \qquad Z_3\times Z_3^d,
  \qquad Z_3\times C_2,
  \qquad C_2\times C_2.
\]
Each product has dimension at least four~\cite{OrderGridsProducts}.
Hence
\[
  4\leq\dim(X\times Y)
  \leq\dim(P_0\times Q_0),
\]
as required.
\end{proof}

\begin{corollary}\label{cor:crown-any}
Let \(P\) be a finite poset of dimension three.  For every \(k\geq3\),
\[
  \dim(C_k\times P)=4.
\]
\end{corollary}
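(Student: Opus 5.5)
Lower and upper bounds are handled separately, and both reduce to results already established. For the lower bound, \(C_k\) itself has dimension three for \(k\geq3\), so \(\dim C_k=\dim P=3\). Theorem~\ref{thm:main}, applied to the pair \(C_k\) and \(P\), then gives \(\dim(C_k\times P)\geq4\). Alternatively, one can pass to a \(3\)-irreducible subposet \(P_0\subseteq P\), which exists by~\cite{Kelly1977}. If \(P_0\) is a crown \(C_\ell\), we use \(\dim(C_k\times C_\ell)=4\)~\cite{OrderGridsProducts}. If \(P_0\) is not a crown, we use Corollary~\ref{cor:crown-noncrown}. In either case monotonicity gives \(\dim(C_k\times P)\geq\dim(C_k\times P_0)=4\).

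For the upper bound, I would invoke the Felsner et al.\ inequality \eqref{eq:fmw-crown-upper-intro} with \(X=P\):
\[
  \dim(P\times C_k)\leq\max\{3,\dim P+1\}=\max\{3,4\}=4.
\]
Since \(P\times C_k\cong C_k\times P\) and dimension is invariant under isomorphism, this gives \(\dim(C_k\times P)\leq4\). Combining the two bounds yields equality.

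There is no substantial obstacle; the work was done in Theorem~\ref{thm:main} and in the cited upper bound. The only step needing care is the bookkeeping that \eqref{eq:fmw-crown-upper-intro} is stated for an arbitrary finite poset \(X\) and not only for \(3\)-irreducible ones. The reverse inequality must be applied to the original \(P\), not to the subposet \(P_0\), because monotonicity only transfers lower bounds upward.
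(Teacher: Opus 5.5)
Your proof is correct and matches the paper's: the lower bound is Theorem~\ref{thm:main} applied to \(C_k\) and \(P\), using \(\dim C_k=3\). The upper bound is \eqref{eq:fmw-crown-upper-intro} with \(X=P\) together with \(C_k\times P\cong P\times C_k\); your alternative lower-bound argument through a \(3\)-irreducible subposet simply unpacks the relevant cases of the proof of Theorem~\ref{thm:main}, so it is redundant but harmless.
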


\begin{proof}
Theorem~\ref{thm:main} and \eqref{eq:fmw-crown-upper-intro} give
\[
  4\leq\dim(C_k\times P)=\dim(P\times C_k)
  \leq\max\{3,\dim P+1\}=4.
\]
\end{proof}

\section*{Declarations}

\paragraph{Program availability.}
The supplementary archive contains
\nolinkurl{verify_fixed_cases_reference.py}, the Python program used in
Section~\ref{sec:finite-cases}, together with the commands and expected output
needed to reproduce Table~\ref{tab:fixed-graph-sizes}.  The input relations
are included in the program.  The program uses only the Python standard
library.  No external data are used.

\paragraph{Use of generative AI.}
During the preparation of this work, the authors used generative AI to improve
the readability and language of the manuscript and to assist with writing the
Python verification program for the ten fixed posets.  
The authors carefully reviewed and verified the manuscript and take full responsibility for its content, including the correctness of all mathematical statements, proofs, and references.


\begin{thebibliography}{99}

\bibitem{BarreraCruzEtAl2019}
F. Barrera-Cruz, R. Garcia, P. Harris, B. Kubik, H. Smith,
S. Talbott, L. Taylor, and W. T. Trotter,
The graph of critical pairs of a crown.
\textit{Order} 36 (2019), 621--652.

\bibitem{Bergman2026}
G.M. Bergman,
Some frustrating questions on dimensions of products of posets.
\textit{Discrete Math.} 349 (2026), Article 115002.

\bibitem{DushnikMiller1941}
B. Dushnik and E.W. Miller,
Partially ordered sets.
\textit{Amer. J. Math.} 63 (1941), 600--610.

\bibitem{OrderGridsProducts}
S. Felsner, T. M\"utze, and M. Wittmann,
Order dimension, grids, and products.
\textit{Order} 42 (2025), 811--827.

\bibitem{FelsnerTrotter2000}
S. Felsner and W. T. Trotter,
Dimension, graph and hypergraph coloring.
\textit{Order} 17 (2000), 167--177.

\bibitem{Kelly1977}
D. Kelly,
The \(3\)-irreducible partially ordered sets.
\textit{Canad. J. Math.} 29 (1977), 367--383.

\bibitem{KellyTrotter1982}
D. Kelly and W. T. Trotter,
Dimension theory for ordered sets.
In: I. Rival (Ed.), \textit{Ordered Sets},
D. Reidel, 1982, pp. 171--211.

\bibitem{Reuter1989}
K. Reuter,
On the dimension of the Cartesian product of relations and orders.
\textit{Order} 6 (1989), 277--293.

\bibitem{TrotterCrown1974}
W. T. Trotter,
Dimension of the crown \(S_n^k\).
\textit{Discrete Math.} 8 (1974), 85--103.

\bibitem{Trotter1985}
W. T. Trotter,
The dimension of the Cartesian product of partial orders.
\textit{Discrete Math.} 53 (1985), 255--263.

\bibitem{Trotter1992}
W. T. Trotter,
\textit{Combinatorics and Partially Ordered Sets: Dimension Theory}.
Johns Hopkins University Press, Baltimore, 1992.

\bibitem{TrotterMoore1976}
W. T. Trotter and J. I. Moore,
Characterization problems for graphs, partially ordered sets,
lattices, and families of sets.
\textit{Discrete Math.} 16 (1976), 361--381.

\end{thebibliography}
\end{document}